\def\section{\@startsection{section}{1}\z@{.9\linespacing\@plus\linespacing}%
  {.7\linespacing} {\fontsize{13}{14}\selectfont\bfseries\centering}}
\def\paragraph{\@startsection{paragraph}{4}%
  \z@{0.3em}{-.5em}%
  {$\bullet$ \ \normalfont\itshape}}
\newtheorem{theorem}{Theorem}[section]
\newtheorem{lemma}[theorem]{Lemma}
\newtheorem{corollary}[theorem]{Corollary}
\theoremstyle{definition}
\newtheorem{definition}[theorem]{Definition}
\theoremstyle{remark}
\newtheorem{remark}[theorem]{Remark}
\definecolor{webred}{rgb}{0.75,0,0}
\definecolor{webgreen}{rgb}{0,0.75,0}
\newcommand{\ba}{\mathbf{a}}
\newcommand{\cB}{\mathcal{B}}
\newcommand{\cBp}{\mathcal{B}'}
\newcommand{\bc}{\mathbf{c}}
\newcommand{\bbC}{\mathbb{C}}
\newcommand{\cC}{\mathcal{C}}
\newcommand{\dd}{\mathrm{d}}
\newcommand{\cD}{\mathcal{D}}
\newcommand{\bE}{\mathbf{E}}
\newcommand{\bF}{\mathbf{F}}
\newcommand{\be}{\mathbf{e}}
\newcommand{\rme}{\mathrm{e}}
\newcommand{\boldf}{\mathbf{f}}
\newcommand{\bH}{\mathbf{H}}
\newcommand{\bbH}{\mathbb{H}}
\newcommand{\rH}{\mathrm{H}}
\newcommand{\bh}{\mathbf{h}}
\newcommand{\bk}{\mathbf{k}}
\newcommand{\cKt}{\mathcal{K}}
\newcommand{\cKp}{\mathcal{K'}}
\newcommand{\cK}{\mathcal{K}}
\newcommand{\bbL}{\mathbb{L}}
\newcommand{\cL}{\mathcal{L}}
\newcommand{\rL}{\mathrm{L}}
\newcommand{\bbN}{\mathbb{N}}
\newcommand{\cP}{\mathcal{P}}
\newcommand{\rP}{\mathrm{P}}
\newcommand{\bbR}{\mathbb{R}}
\newcommand{\bu}{\mathbf{u}}
\newcommand{\bU}{\mathbf{U}}
\newcommand{\bv}{\mathbf{v}}
\newcommand{\bV}{\mathbf{V}}
\newcommand{\bx}{\mathbf{x}}
\newcommand{\bxi}{\bm{\xi}}
\newcommand{\bxip}{\bm{\xi}'}
\newcommand{\xib}{\,'_{\!\!(\bxi'\!,\beta)}}
\newcommand{\bbZ}{\mathbb{Z}}
\newcommand{\loc}{\mathrm{loc}}
\newcommand{\per}{\mathrm{per}}
\newcommand{\grad}{\operatorname{\mathbf{grad}}\,}
\newcommand{\Div}{\operatorname{\mathrm{div}}}
\newcommand{\curl}{\operatorname{\mathrm{curl}}}
\newcommand{\Curl}{\operatorname{\mathbf{curl}}\,}
\renewcommand{\Re}{\operatorname{\mathrm{Re}}}
\newcommand{\nrm}  [1] {\Vert #1 \Vert}
\newcommand{\tsfrac}  [2] { {\textstyle \frac{#1}{#2} } }
\newcommand{\beq}{\begin{equation}}
\newcommand{\eeq}{\end{equation}}
\title{Regularity for Maxwell eigenproblems in photonic crystal fibre modelling}
\author{Monique Dauge, Richard A. Norton, and Robert Scheichl}
\address{Monique Dauge,
           IRMAR, Universit\'{e} de Rennes 1, Campus de Beaulieu, 35042, Rennes Cedex France}
\email{monique.dauge@univ-rennes1.fr}
\address{Richard A. Norton,
           Department of Physics, University of Otago, PO Box 56, Dunedin 9054, New Zealand
           Tel.: +64-3-4797749 
           Fax: +64-3-4790964 }
           \email{richard.norton@otago.ac.nz}
\address{Robert Scheichl,
           Department of Mathematical Sciences, University of Bath, Bath BA2 7AY, UK }
		   \email{R.Scheichl@bath.ac.uk}
\subjclass[2000]{35B65, 35B15, 35Q61, 78A48}
\keywords{Sobolev regularity, Maxwell eigenproblem, Kondratiev's method, photonics, photonic crystal fibres}
\begin{document}

\begin{abstract}
The convergence behaviour and the design of numerical methods 
for modelling the flow of light in photonic crystal fibres 
depend critically on an understanding of the regularity of 
solutions to time-harmonic Maxwell equations in a 
three-dimensional, periodic, translationally invariant, 
heterogeneous medium.  In this paper we determine the strength 
of the dominant singularities that occur at the interface 
between materials.  By modifying earlier regularity theory 
for polygonal interfaces we find that on each subdomain, where 
the material in the fibre is constant, the regularity of 
in-plane components of the magnetic field are $H^{2-\eta}$ for 
all $\eta > 0$. This estimate is sharp in the sense that these 
components do not belong to $H^2$, in general. However, global 
regularity is restricted by the presence of an interface 
between these subdomains and the interface conditions imply 
only $H^{3/2-\eta}$ regularity across the interface.  The 
results are useful to anyone applying a numerical method such 
as a finite element method or a planewave expansion method to 
model photonic crystal fibres or similar materials.
\end{abstract}

\maketitle

\section{Introduction}
\label{intro}

This paper is concerned with source-free time-harmonic Maxwell equations in a three-dimensional, periodic, heterogeneous medium that is non-magnetic.  The problem is: Find non-zero $(\bE,\bH) \in \bbL^2_{\loc}(\bbR^3) \times \bbL^2_{\loc}(\bbR^3)$ and $\omega \in \bbR$ such that
\begin{subequations}
\label{maxwell012}
\begin{align}
\label{maxwell01} \nabla \times \bE - i \omega \mu \bH &= 0, \\
\label{maxwell02} \nabla \times \bH + i \omega \epsilon \bE &= 0, 
\end{align}
\end{subequations}
where $\bbL^2_{\loc}(\bbR^3) = (\rL^2_{\loc}(\bbR^3))^3$ is the space of locally square integrable vector fields on $\bbR^3$, the magnetic permeability $\mu = \mu_0$ is constant and equal to the permeability of a vacuum, and the electric permittivity $\epsilon$ (also called the dielectric) is a given function which is positive, bounded and with bounded inverse.  The vector fields $\bE$ and $\bH$ are respectively the electric field and magnetic field, and $\omega$ is the frequency. 

An alternative equivalent formulation of problem \eqref{maxwell012} is the following eigenproblem where the electric field has been eliminated: Find non-zero $\bH \in \bbL^2_{\loc}(\bbR^3)$ and $\kappa \in \bbR$ such that
\begin{subequations}
\label{maxwell12}
\begin{align}
\label{maxwell1}	\nabla \times ( \tsfrac{1}{n^2} \nabla \times \bH ) &= \kappa^2 \bH, \\
\label{maxwell2}	\nabla \cdot \bH &= 0,
\end{align}
\end{subequations}
where $n$ is the refractive index of the material and is related to $\epsilon$ by $\epsilon = \epsilon_0 n^2$ (where constant $\epsilon_0$ is the permittivity of free space).  The wave number $\kappa$ is related to $\omega$ by $\kappa^2 = \epsilon_0 \mu_0 \omega^2$.  
As we will see in this paper, the motivation for using \eqref{maxwell12} instead of \eqref{maxwell012} is that the regularity of $\bH$ is better than $\bE$, so we expect numerical methods to perform better on \eqref{maxwell12}.

The electric permittivity $\epsilon$ is assumed periodic with respect to a given lattice $\cKt$: For linearly independent primitive lattice vectors $\ba_1, \ba_2 , \ba_3\in \bbR^3$, the lattice $\cKt$ is the set $\{ \bk \in \bbR^3 :\ \bk = k_1 \ba_1 + k_2 \ba_2,+ k_3 \ba_3, \ k_1,k_2,k_3 \in \bbZ \}$ and $\epsilon$ satisfies
\begin{equation}
\label{eq:epsgen}
	\epsilon(\bx + \bk) = \epsilon(\bx) \qquad 
	\mbox{for all \ $\bx \in \bbR^3$ \ and \ $\bk \in \cKt$}.
\end{equation}
Applying the Floquet-Bloch transform translates the problem on $\bbR^3$ into a family of problems on the periodicity cell (torus) $Q = \bbR^3 / \cKt$. The new problems are thus posed on a compact manifold, and the new operators have compact resolvent and discrete spectra.  The spectrum (in the form of spectral bands) of the original (un-transformed) problem is then obtained by taking the union of the spectra of the family of transformed problems (c.f. \cite{kuchmentbook,kuchmentarticle,figotin1997}). 
 
In this paper, we focus in particular on problems arising from the propagation of light in photonic crystal fibres (PCF), novel optical devices that overcome the limitations of conventional fibre optics \cite{knight,russell,joannopoulosbook}. In PCFs we have translational invariance along the length of the fibre, as well as periodicity in the transverse directions. Let $(x,y,z)$ be the coordinates of the generic point $\bx\in\bbR^3$ and assume that the medium is translationally invariant in the $z$-direction, i.e., $\epsilon = \epsilon(\bx')$ with $\bx'$ denoting the transverse variables $(x,y)$. In this case the periodicity is relative to a two-dimensional lattice $\cKp$ with primitive vectors $\ba_1$, $\ba_2$ and $\epsilon$ satisfies
\begin{equation}
\label{eq:epstrans}
	\epsilon(\bx' + \bk') = \epsilon(\bx') \qquad 
	\mbox{for all \ $\bx' \in \bbR^2$ \ and \ $\bk' \in \cKp$}.
\end{equation}
Then the three-dimensional Floquet-Bloch transform degenerates into a two-dimen\-sional Floquet-Bloch transform in transverse variables $\bx'$ and a partial Fourier transform in longitudinal variable $z$. Again we obtain a family of Maxwell problems posed on a compact manifold without boundary, the torus $Q' = \bbR^2 / \cKp$ (period cell of $\cKp$).

Besides proving basic regularity in $\rH^1$ for each component of the
magnetic field for both the primitive equations \eqref{maxwell12} and
the problems deduced by Floquet-Bloch transforms in general periodic media, 
the main aim of this
paper is to establish optimal regularity results in the case of PCFs
with polygonal cross section. In this latter situation, our results
are more precise than those provided by the general regularity theory
for Maxwell interface problems in \cite{costabel00,costabel}. Let us
mention also the paper \cite{elschner} which studies the same problem as
we do in the context of diffraction gratings, however considering
only the regularity of the $z$-components of $\bH$ and $\bE$. Here,
by focusing on the regularity of the $x$- and $y$-components of $\bH$
we manage to obtain better regularity results, even in the more
general situation of an interface between any finite number of materials 
with distinct dielectric values. The results carry over to piecewise 
$C^2$ cross sections without cusps.  The improved regularity results are
important for the design and analysis of numerical methods for PCF
modelling. It is important to note that the PCF problem that we
investigate is the full vectorial problem (without additional
simplifications), as studied in the physics literature
\cite{ferrandoetal,saitohkoshiba,pottage2003robust,nicoletetal,pearce2005adaptivecurvilinear,joannopoulosbook}. 
See \cite{norton2012apnum} for more details.

The paper is organised as follows: The rationale is to examine the
regularity of solutions of problem \eqref{maxwell12} under more and
more specific assumptions on the electric permittivity $\epsilon$.  In
\S\ref{sec:bounded} we simply suppose that the material is
non-magnetic and $\epsilon$ bounded with bounded inverse, and prove the
$\rH^1$--regularity for $\bH$. In \S\ref{sec:periodic} we impose
periodicity and exploit symmetries to derive a weak formulation for 
the family of
Floquet transformed operators on a compact manifold.  We prove the
existence of a sequence of real eigenvalues with corresponding
eigenvectors.  In \S\ref{sec:pertrans} we assume in addition that
$\epsilon$ is translationally invariant and obtain more specific
results. In the remaining sections we then focus on piecewise constant
permittivity $\epsilon$ on a polyhedral partition of the full space,
still assuming periodicity. First, in \S\ref{sec:reg} the assumptions
are again very general and we recall results from the literature. In
\S\ref{sec:reg+} we add the assumption of invariance in one direction
and prove the main result of the paper: piecewise
$\rH^{2-\eta}$--regularity for the transverse components of the
magnetic field, for any $\eta >0$. In \S\ref{sec:reg++} we finally
assume that in addition there are only two materials with simple edge 
interfaces (no cross points), which is typical for PCFs. We recall a 
result from \cite{elschner} which gives 
the regularity of the longitudinal components of the electric and
magnetic fields, and deduce an explicit expansion for the transverse 
components of the magnetic field which slightly improves the result
from \S\ref{sec:reg+}, but also shows that the eigenfunctions are 
not piecewise $\rH^2$. The final section \S\ref{sec fem} contains some
conclusions and a discussion of how our new result can be applied in the
convergence theory of Galerkin methods.

\section{Bounded electric permittivity}
\label{sec:bounded}

Revisiting classical arguments we prove the following theorem.
\begin{theorem}
\label{th:1}
Let the magnetic permeability $\mu=\mu_0$ be constant and let the electric permittivity satisfy
\[
   \epsilon>0,\quad \epsilon\in\rL^{\!\infty}_\loc(\bbR^3), 
   \quad \epsilon^{-1}\in\rL^{\!\infty}_\loc(\bbR^3).
\]
(i) If $(\bE,\bH)$ is a solution of \eqref{maxwell012} in
$\bbL_\loc^2(\bbR^3)\times\bbL^2_\loc(\bbR^3)$ with $\omega\neq0$,
then $\bH$ is a solution of \eqref{maxwell12} and belongs to
$\bbH^1_\loc(\bbR^3)$. 
(ii) On the other hand, if $\bH$ and $\nabla\times\bH$ belong to $\bbL^2_\loc(\bbR^3)$ and are such that equations \eqref{maxwell12} are satisfied, then $\bH$ belongs to $\bbH^1_\loc(\bbR^3)$ and, setting $\bE = \frac{i}{\omega\epsilon} \nabla \times \bH$, we find a solution $(\bE,\bH)$ of equations \eqref{maxwell012}.
\end{theorem}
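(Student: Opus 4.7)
The argument splits naturally into two algebraic passages — from \eqref{maxwell012} to \eqref{maxwell12} and back — together with one regularity upgrade for $\bH$ from $\bbL^2_\loc$ to $\bbH^1_\loc$.

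For \emph{part (i)}, since $\omega\neq 0$ and $\epsilon^{-1}\in\rL^{\!\infty}_\loc$, equation \eqref{maxwell02} can be solved for $\bE=\frac{i}{\omega\epsilon}\Curl\bH$, so in particular $\Curl\bH=-i\omega\epsilon\bE$ lies in $\bbL^2_\loc$. Substituting this expression into \eqref{maxwell01} and using $\epsilon=\epsilon_0 n^2$ together with $\kappa^2=\epsilon_0\mu_0\omega^2$, I recover \eqref{maxwell1}; taking the divergence of \eqref{maxwell01} and cancelling the nonzero prefactor $i\omega\mu_0$ yields \eqref{maxwell2}. \emph{Part (ii)} is just the reverse algebra: defining $\bE$ by the stated formula, \eqref{maxwell02} holds by construction, and \eqref{maxwell01} follows by multiplying \eqref{maxwell1} by $\frac{i}{\omega\epsilon_0}$ and identifying the left-hand side as $\Curl\bE$.

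In both parts what remains is the regularity claim: if $\bH$ and $\Curl\bH$ lie in $\bbL^2_\loc(\bbR^3)$ and $\Div\bH=0$, then $\bH\in\bbH^1_\loc(\bbR^3)$. I fix a cut-off $\chi\in C_c^\infty(\bbR^3)$ and prove componentwise that $\chi H_j\in\rH^1(\bbR^3)$. Starting from the distributional identity
\[
-\Delta\bH=\Curl(\Curl\bH)-\grad(\Div\bH),
\]
the right-hand side reduces to $\Curl(\Curl\bH)\in\bbH^{-1}_\loc(\bbR^3)$, so each component $H_j$ satisfies $\Delta H_j\in\rH^{-1}_\loc(\bbR^3)$. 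Expanding $\Delta(\chi H_j)$ and rewriting the apparent remainder $2\nabla\chi\cdot\nabla H_j$ as $2\Div(\nabla\chi\,H_j)-2\Delta\chi\,H_j$ forces all surviving contributions into $\rH^{-1}(\bbR^3)$; combined with $\chi H_j\in\rL^2(\bbR^3)$, a Fourier-side multiplier estimate then yields $\chi H_j\in\rH^1(\bbR^3)$.

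The main subtlety is precisely this cut-off commutation: the natural remainder contains $\nabla\bH$, which is not yet controlled, and must be re-expressed as a divergence of an $\bbL^2$ field before the elliptic shift for $-\Delta$ becomes available. Everything else reduces either to the algebraic reformulation of the two systems or to the standard $\rL^2\!\to\!\rH^1$ elliptic regularity for $-\Delta$ on all of $\bbR^3$, where no boundary contributions enter the estimate.
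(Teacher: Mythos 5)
Your proposal is correct, and the algebraic passages (i)$\Leftrightarrow$(ii) match the paper's argument essentially verbatim: divergence of \eqref{maxwell01} gives \eqref{maxwell2}, solving \eqref{maxwell02} for $\bE$ and substituting into the distributional form of \eqref{maxwell01} gives \eqref{maxwell1}, and the converse is the reverse computation. The regularity step is where you diverge in mechanics, though not in spirit. The paper cuts off first and then invokes the Fourier identity for compactly supported fields, namely that $\bv,\ \nabla\times\bv,\ \nabla\cdot\bv\in\rL^2(\bbR^3)$ implies $\nabla\bv\in(\rL^2(\bbR^3))^9$ (Girault--Raviart, Ch.~1, Lem.~2.5, via $|\xi|^2|\hat\bv|^2=|\xi\times\hat\bv|^2+|\xi\cdot\hat\bv|^2$). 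The advantage of that order of operations is that the commutators $\nabla\times(\chi\bH)-\chi\,\nabla\times\bH=\nabla\chi\times\bH$ and $\nabla\cdot(\chi\bH)-\chi\,\nabla\cdot\bH=\nabla\chi\cdot\bH$ involve only $\bH$ itself, so no rewriting is needed. You instead pass through $-\Delta\bH=\nabla\times(\nabla\times\bH)-\grad\nabla\cdot\bH$ and the $\rL^2\cap\Delta^{-1}\rH^{-1}\Rightarrow\rH^1$ shift on $\bbR^3$, which forces you to confront the commutator term $2\nabla\chi\cdot\nabla H_j$ containing the uncontrolled gradient; your rewriting of it as $2\nabla\cdot(\nabla\chi\,H_j)-2\Delta\chi\,H_j$ correctly disposes of it, and the multiplier estimate $(1+|\xi|^2)^{1/2}\hat u=(1+|\xi|^2)^{-1/2}\hat u+(1+|\xi|^2)^{-1/2}|\xi|^2\hat u$ closes the argument. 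Both routes are valid Fourier-side elliptic arguments; the paper's buys a cleaner cut-off at the price of citing the div--curl lemma, while yours is self-contained but needs the extra integration-by-parts trick you correctly flagged as the main subtlety.
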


\begin{proof}
Taking the divergence of \eqref{maxwell01} yields $\nabla\cdot \bH = 0$ since $\mu$ is constant and $\omega\neq0$. So we have \eqref{maxwell2}. Equation \eqref{maxwell02} implies 
\[
   \bE = \frac{i}{\omega\epsilon} \nabla \times \bH 
\] 
and \eqref{maxwell01} in the distributional sense reads
\[
   \langle \bE, \nabla\times\bF \rangle = i\omega\mu \langle \bH, \bF \rangle,
   \quad\forall\bF\in\cD(\bbR^3)^3.
\]
Substituting $\bE$ gives
\[
   \langle \epsilon^{-1} \nabla \times \bH , \nabla\times\bF \rangle = 
   \omega^2\mu \langle \bH, \bF \rangle
\]
which clearly implies \eqref{maxwell1}.
{To finish the proof of (i),} it remains to establish the regularity of $\bH$.
Equation \eqref{maxwell02} implies that $\nabla \times \bH \in \bbL^2_\loc(\bbR^3)$.  Together with \eqref{maxwell2}, for any cut-off function $\chi \in \cD(\bbR^3)$, we then have
$$
	\chi \bH \in \bbL^2(\bbR^3), \quad 
	\nabla\cdot(\chi \bH) \in \rL^2(\bbR^3), \quad
	\nabla \times (\chi \bH) \in \bbL^2(\bbR^3).
$$
The Fourier transform then yields $\nabla(\chi \bH) \in
(\rL^2(\bbR^3))^9$ (cf.\ \cite[Ch. 1,
Lem. 2.5]{giraultraviart}). Therefore $\bH \in \bbH^1_\loc(\bbR^3)$.
{The proof of (ii) is then immediate.} 
\end{proof}

Note that we have shown the following result.
\begin{lemma}
\label{lem:1}
There holds the embedding 
\[
   \{ \bv \in \bbL^2_{\loc}(\bbR^3) : 
	 \nabla \times \bv \in \bbL^2_{\loc}(\bbR^3), \; \nabla\cdot \bv \in \rL^2_{\loc}(\bbR^3) \}
	 \subset \bbH^1_\loc(\bbR^3).
\]
\end{lemma}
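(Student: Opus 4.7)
The embedding is essentially a restatement of the intermediate step in the proof of Theorem~\ref{th:1}(i); the plan is to isolate that step and carry it out for an arbitrary $\bv$ in the left-hand side, without reference to the Maxwell system.

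Fix $\bv$ in the set on the left and fix an arbitrary cut-off $\chi\in\cD(\bbR^3)$. First I would check, using the Leibniz rules
\[
   \nabla\cdot(\chi\bv)=\chi\,\nabla\cdot\bv+\nabla\chi\cdot\bv,
   \qquad
   \nabla\times(\chi\bv)=\chi\,\nabla\times\bv+\nabla\chi\times\bv,
\]
and the fact that $\nabla\chi$ is bounded with compact support, that $\chi\bv$ has the properties
\[
   \chi\bv\in\bbL^2(\bbR^3),\quad
   \nabla\cdot(\chi\bv)\in\rL^2(\bbR^3),\quad
   \nabla\times(\chi\bv)\in\bbL^2(\bbR^3).
\]
Here the point is that all the contributions on the right involve either a local $\rL^2$ function multiplied by the compactly supported bounded factor $\chi$ or $\nabla\chi$, so each term is genuinely in $\rL^2(\bbR^3)$ and not merely in $\rL^2_\loc$.

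Next I would pass to the Fourier transform. Setting $\bu=\chi\bv$, Plancherel gives $\widehat{\bu}\in\bbL^2(\bbR^3)$, while the hypotheses just established translate into $\bxi\cdot\widehat{\bu}(\bxi)\in\rL^2(\bbR^3)$ and $\bxi\times\widehat{\bu}(\bxi)\in\bbL^2(\bbR^3)$. The key pointwise identity is
\[
   |\bxi|^{2}|\widehat{\bu}(\bxi)|^{2}
   \;=\;|\bxi\cdot\widehat{\bu}(\bxi)|^{2}+|\bxi\times\widehat{\bu}(\bxi)|^{2},
\]
valid for every $\bxi\in\bbR^3$ by elementary linear algebra. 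Integrating and applying Plancherel backwards shows that $\nabla\bu\in\rL^2(\bbR^3)^{9}$, i.e.\ $\chi\bv\in\bbH^1(\bbR^3)$. Since $\chi$ was arbitrary, $\bv\in\bbH^1_\loc(\bbR^3)$.

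I do not expect a serious obstacle: the whole argument is the standard \emph{curl-div}~estimate on $\bbR^3$, already invoked via \cite[Ch.~1, Lem.~2.5]{giraultraviart} in the proof of Theorem~\ref{th:1}. The only mild subtlety is the localisation step, ensuring that the commutators generated by $\chi$ lie in $\rL^2(\bbR^3)$ so that the global Fourier argument applies; once $\chi\bv$ has compact support this is automatic.
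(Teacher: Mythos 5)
Your proposal is correct and follows essentially the same route as the paper: localise with a cut-off $\chi$, observe via the Leibniz rules that $\chi\bv$, $\nabla\cdot(\chi\bv)$ and $\nabla\times(\chi\bv)$ all lie in (global) $\rL^2(\bbR^3)$, and conclude $\nabla(\chi\bv)\in(\rL^2(\bbR^3))^9$ by the Fourier-transform identity $|\bxi|^2|\widehat{\bu}|^2=|\bxi\cdot\widehat{\bu}|^2+|\bxi\times\widehat{\bu}|^2$ — the paper simply delegates this last step to \cite[Ch.~1, Lem.~2.5]{giraultraviart}, whereas you write it out explicitly.
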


\section{Periodic electric permittivity}
\label{sec:periodic}
In this section, we assume that $\epsilon$ is periodic with respect to the lattice $\cKt$, cf.\ \eqref{eq:epsgen} and that
\begin{equation}
\label{eq:epsb}
   \epsilon>0,\quad \epsilon\in\rL^{\!\infty}(\bbR^3), 
   \quad \epsilon^{-1}\in\rL^{\!\infty}(\bbR^3).
\end{equation}
The operator $\bH\mapsto \nabla \times ( \tsfrac{1}{n^2} \nabla \times \bH )$ appearing in \eqref{maxwell1} defines an unbounded self-adjoint operator $L$ on $\bbL^2(\bbR^3)$ with domain
\[
   D(L) := \lbrace \bv \in \bbL^2(\bbR^3) : \nabla \times \bv \in \bbL^2(\bbR^3)\ \ 
   \mbox{and}\ \ 
   \nabla \times ( \tsfrac{1}{n^2} \nabla \times\bv) \in \bbL^2(\bbR^3)\rbrace .
\]
We find the relevant part of the spectrum, by imposing in addition the gauge condition \eqref{maxwell2}.
The form domain of the operator (still denoted by $L$) is then
$$
	\bV := \{ \bv \in \bbL^2(\bbR^3) : \nabla \times \bv \in \bbL^2(\bbR^3), \; 
	\nabla \cdot \bv = 0 \}.
$$

Due to the periodicity of $\epsilon$ (equivalently, the periodicity of
$n$) we can apply the Floquet-Bloch transform to $L$ (cf. \cite{ashcroft,joannopoulosbook,kuchmentbook,kuchmentarticle}).
This reduces the problem from studying the operator $L$ to a family of operators $L_{\bxi}$ acting on periodic functions on a compact manifold, the period cell $Q=\bbR^3/\cKt$.  The key result from Floquet-Bloch theory that is used to find the spectrum of $L$ is
\beq
\label{keyfloquetgen}
	\sigma(L) = \overline{\bigcup_{\bxi \in \cB} \sigma(L_{\bxi})},
\eeq
where $\sigma(\cdot)$ denotes the spectrum of an operator.  See \cite{kuchmentbook,kuchmentarticle} and references therein for details.  In particular, \cite{figotin1997} has a good description of Floquet-Bloch theory for the Maxwell operator in $\bbR^3$ where $n$ is periodic on a three-dimensional lattice.  

Let us quickly describe the set $\cB \subset \bbR^3$ that appears in
\eqref{keyfloquetgen}. It is the $1^{\rm st}$ Brillouin zone of $\cK$ (i.e. the Wigner-Seitz period cell of the reciprocal lattice, for definitions see e.g. \cite{ashcroft}).  If $\ba_1 = (\ell_1,0,0)$, $\ba_2 = (0,\ell_2,0)$, and $\ba_3=(0,0,\ell_3)$, then 
$$
   Q = (-\tsfrac{\ell_1}{2},\tsfrac{\ell_1}{2}] \times
   (-\tsfrac{\ell_2}{2},\tsfrac{\ell_2}{2}] \times(-\tsfrac{\ell_3}{2},\tsfrac{\ell_3}{2}] 
   \ \ \mbox{and}\ \ 
   \cB = (-\tsfrac{\pi}{\ell_1},\tsfrac{\pi}{\ell_1}] \times
   (-\tsfrac{\pi}{\ell_2},\tsfrac{\pi}{\ell_3}] \times
   (-\tsfrac{\pi}{\ell_3},\tsfrac{\pi}{\ell_3}] .
$$ 
The details of the Floquet-Bloch transform may be hidden by simply requiring that we search for magnetic fields of the form
\begin{equation}
\label{eq:Hu}
	\bH(\bx) = \bu(\bx) \rme^{i \bxi \cdot \bx}
\end{equation}
where $\bx = (x,y,z)$, $\bxi=(\xi_1,\xi_2,\xi_3) \in \cB$ and $\bu$ is periodic with respect to $\cK$.
In this way $L$ is associated with the family of operators, 
\beq
\label{zinv1}
	\bigl\{ L_{\bxi} : \bxi \in \cB \bigr\},
\eeq
parameterised by $\bxi$, where each $L_{\bxi}$ 
is the self-adjoint operator 
$$
	L_{\bxi} := \nabla_{\bxi} \times (\tsfrac{1}{n^2} \nabla_{\bxi} \times \cdot) 
	\quad\mbox{with}\quad
	\nabla_{\bxi} := 
	(\tsfrac{\partial}{\partial x}, \tsfrac{\partial}{\partial y}, \tsfrac{\partial}{\partial z}) 
	+ i\bxi
$$
operating on a Hilbert space of periodic functions (form domain):
\begin{equation}
\label{eq:Vxi}
	\bV_{\bxi} := \{ \bv \in \bbL^2_{\per} : 
	\nabla_{\bxi} \times \bv \in \bbL^2_{\per}, \; \nabla_{\bxi} \cdot \bv = 0 \}
\end{equation}
where $\bbL^2_{\per} := \{ \bv \in \bbL^2_{\loc}(\bbR^3) : \bv \mbox{
  is periodic on $\cK$} \}$. Note that $\bbL^2_{\per}(\bbR^3)$
identifies with $\bbL^2(Q)$. Likewise we define $\bbH^1_{\per}$ as $\{
\bv \in \bbH^1_{\loc}(\bbR^3) : \bv \mbox{ is periodic on $\cK$} \}$,
which identifies with $\bbH^1(Q)$. Let $a_{\bxi}: \bV_{\bxi} \times
\bV_{\bxi} \rightarrow \bbC$ be the sesquilinear form associated with
$L_{\bxi}$, i.e.
\begin{equation*}
	a_{\bxi}(\bu,\bv) := \int_Q \tsfrac{1}{n^2} \nabla_{\bxi}
        \times \bu \cdot \overline{ \nabla_{\bxi} \times \bv} \; \dd x \dd y \dd z,
        \quad \bu,\bv\in\bV_{\bxi} \,.
\end{equation*}
Here follows the main result of this section.

\begin{theorem}
\label{th:2}
Let the magnetic permeability $\mu=\mu_0$ be constant and let the electric permittivity $\epsilon$ be periodic \eqref{eq:epsgen} and satisfy the boundedness conditions \eqref{eq:epsb}. Then for any $\bxi$ in the Brillouin zone $\cB$, the variational space $\bV_{\bxi}$ \eqref{eq:Vxi} is contained in the Sobolev space $\bbH^1_{\per}$. Moreover $a_{\bxi}$ satisfies a G\r{a}rding inequality on $\bV_{\bxi}$ with respect to the $\bbH^1_{\per}$-norm: for any $\bxi\in\cB$ and $\bv \in \bV_{\bxi}$
\begin{equation}
\label{eq:coer}
   \nrm{n}^2_{\rL^\infty_{\per}} \, a_{\bxi}(\bv,\bv) + (3|\bxi|^2+1)\nrm{\bv}_{\bbL^2_{\per}}^2 \geq 
   \tsfrac12\nrm{\bv}_{\bbH^1_{\per}}^2\,.
\end{equation}
The spectrum of $L_{\bxi}$ is discrete and formed by a sequence of nonnegative eigenvalues.
\end{theorem}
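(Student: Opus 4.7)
The plan is to prove the three assertions in order: Lemma~\ref{lem:1} handles the embedding, a torus integration-by-parts identity handles the Gårding estimate, and Rellich compactness together with Lax--Milgram handles the discreteness of the spectrum.

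For the embedding $\bV_{\bxi}\subset\bbH^1_{\per}$ I would transfer back to the unshifted setting. Given $\bv\in\bV_{\bxi}$, the field $\bH(\bx):=\bv(\bx)\rme^{i\bxi\cdot\bx}$ lies in $\bbL^2_{\loc}(\bbR^3)$, and a direct computation yields $\nabla\cdot\bH=(\nabla_{\bxi}\cdot\bv)\rme^{i\bxi\cdot\bx}=0$ and $\nabla\times\bH=(\nabla_{\bxi}\times\bv)\rme^{i\bxi\cdot\bx}\in\bbL^2_{\loc}(\bbR^3)$. Lemma~\ref{lem:1} (applied componentwise to real and imaginary parts) then gives $\bH\in\bbH^1_{\loc}(\bbR^3)$, and multiplying back by the smooth factor $\rme^{-i\bxi\cdot\bx}$, together with periodicity of $\bv$, places $\bv$ in $\bbH^1_{\per}$.

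For the Gårding inequality I would invoke the classical identity
\begin{equation*}
\int_Q |\nabla\bv|^2\,\dd\bx \;=\; \int_Q |\nabla\times\bv|^2\,\dd\bx \;+\; \int_Q |\nabla\cdot\bv|^2\,\dd\bx,
\end{equation*}
which holds for any $\bv\in\bbH^1_{\per}$: first verify it for $\bv\in C^\infty_{\per}$ via integration by parts on the boundaryless torus $Q$ (the cross terms $\int_Q(\partial_i v_j)(\partial_j\bar v_i)$ rearrange to $\int_Q(\partial_i\bar v_i)(\partial_j v_j)$), then pass to the limit. For $\bv\in\bV_{\bxi}$, the gauge $\nabla_{\bxi}\cdot\bv=0$ yields $\nabla\cdot\bv=-i\bxi\cdot\bv$, so $\int_Q|\nabla\cdot\bv|^2\le|\bxi|^2\nrm{\bv}_{\bbL^2_{\per}}^2$; and the splitting $\nabla\times\bv=\nabla_{\bxi}\times\bv-i\bxi\times\bv$ combined with $|a-b|^2\le 2|a|^2+2|b|^2$ gives $\int_Q|\nabla\times\bv|^2\le 2\int_Q|\nabla_{\bxi}\times\bv|^2+2|\bxi|^2\nrm{\bv}_{\bbL^2_{\per}}^2$. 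Adding $\nrm{\bv}_{\bbL^2_{\per}}^2$ to both sides and using the pointwise lower bound $n^{-2}\ge\nrm{n}_{\rL^\infty_{\per}}^{-2}$ delivers \eqref{eq:coer}; the stated constants $3|\bxi|^2+1$ in fact leave a factor-of-two slack over what this bookkeeping actually produces.

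For the spectrum, $\bV_{\bxi}$ is closed in $\bbH^1_{\per}$ as the kernel of the continuous linear map $\nabla_{\bxi}\cdot\colon\bbH^1_{\per}\to\rL^2_{\per}$, hence a Hilbert space. The shifted sesquilinear form $a_{\bxi}(\cdot,\cdot)+(3|\bxi|^2+1)\langle\cdot,\cdot\rangle_{\bbL^2_{\per}}$ is continuous and coercive on $\bV_{\bxi}$ in the $\bbH^1_{\per}$-norm by \eqref{eq:coer}, so Lax--Milgram supplies a bounded self-adjoint inverse $\bbL^2_{\per}\to\bV_{\bxi}$; composing with the compact Rellich embedding $\bbH^1(Q)\hookrightarrow\rL^2(Q)$ (valid since $Q$ is a compact manifold) produces a compact self-adjoint resolvent. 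The spectral theorem then yields a discrete sequence of real eigenvalues, each nonnegative because $a_{\bxi}(\bv,\bv)\ge 0$. The only step requiring genuine care is justifying the torus identity rigorously in the second paragraph; everything else is elementary bookkeeping.
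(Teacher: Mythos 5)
Your proposal is correct and follows essentially the same route as the paper: the embedding rests on Lemma~\ref{lem:1} (you reach it by conjugating with $\rme^{i\bxi\cdot\bx}$, the paper by noting the shifted and unshifted curl/div conditions coincide modulo $\bbL^2$ terms), the G\r{a}rding bound rests on the same curl--div--grad identity on the boundaryless torus (you use its unshifted form, the paper its $\bxi$-shifted form, with only cosmetic differences in the constants), and the spectral claim follows from coercivity plus Rellich compactness in both. No gaps.
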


\begin{proof}
We first notice that $\bV_{\bxi}$ is embedded in the space
\[
   \{ \bv \in \bbL^2_{\per} : 
	 \nabla_{\bxi} \times \bv \in \bbL^2_{\per}, \; \nabla_{\bxi}\cdot \bv \in \rL^2_{\per} \}
= 
   \{ \bv \in \bbL^2_{\per} : 
	 \nabla \times \bv \in \bbL^2_{\per}, \; \nabla\cdot \bv \in \rL^2_{\per} \},
\]
which is itself contained in
$
   \{ \bv \in \bbL^2_{\loc}(\bbR^3) : 
	 \nabla \times \bv \in \bbL^2_{\loc}(\bbR^3), \; \nabla\cdot \bv \in \rL^2_{\loc}(\bbR^3) \}.
$
According to Lemma \ref{lem:1}, the latter space is embedded in $\bbH^1_\loc(\bbR^3)$, which proves that $\bV_{\bxi}\subset\bbH^1_{\per}$. For the G\r{a}rding inequality we first write
\[
   \nrm{n}^2_{\rL^\infty_{\per}} \, a_{\bxi}(\bv,\bv) \ge \nrm{\nabla_{\bxi}\times\bv}_{\bbL^2_{\per}}^2 \quad \mbox{for all $\bv \in \bV_{\bxi}$.}
\]
For any $\bv \in \bbH^1_{\per}$, we may integrate by parts over $Q$ to obtain
\[
   \nrm{\nabla_{\bxi}\times\bv}_{\bbL^2_{\per}}^2 + 
   \nrm{\nabla_{\bxi}\cdot\bv}_{\bbL^2_{\per}}^2 = 
   - \langle \Delta_{\bxi}\bv,\bv\rangle\,,
\]
where $\langle \cdot,\cdot\rangle$ is the duality pairing between $\bbH^1_{\per}$ and its dual, and 
$$
   \Delta_{\bxi} := (\tsfrac{\partial}{\partial x}+i\xi_1)^2 +
   (\tsfrac{\partial}{\partial y}+i\xi_2)^2 +
   (\tsfrac{\partial}{\partial z}+i\xi_3)^2.
$$
Another integration by parts yields for any $\bv\in\bbH^1_{\per}$
\[
   \nrm{\nabla_{\bxi}\bv}_{\bbL^2_{\per}}^2  = 
   - \langle \Delta_{\bxi}\bv,\bv\rangle\,.
\]
Hence, for any $\bv\in\bV_{\bxi} \subset \bbH^1_{\per}$ there holds $\nrm{\nabla_{\bxi}\times\bv}_{\bbL^2_{\per}}^2 = \nrm{\nabla_{\bxi}\bv}_{\bbL^2_{\per}}^2$, which implies
\[
   \nrm{n}^2_{\rL^\infty_{\per}} \, a_{\bxi}(\bv,\bv) \ge \nrm{\nabla_{\bxi}\bv}_{\bbL^2_{\per}}^2.
\]
By the arithmetic-geometric mean inequality we find that
\[
   \nrm{\nabla_{\bxi}\bv}_{\bbL^2_{\per}}^2 \ge 
   \tsfrac12\nrm{\nabla\bv}_{\bbL^2_{\per}}^2 - 3|\bxi|^2 \nrm{\bv}_{\bbL^2_{\per}}^2,
\]
which proves \eqref{eq:coer}. The spectral properties of $L_{\bxi}$ are now a classical consequence of the compact embedding of $\bbH^1(Q)$ into $\bbL^2(Q)$. 
\end{proof}

\begin{remark}
We have shown the identity
\[
   \nrm{\nabla_{\bxi}\times\bv}_{\bbL^2_{\per}}^2 + 
   \nrm{\nabla_{\bxi}\cdot\bv}_{\bbL^2_{\per}}^2 = 
   \nrm{\nabla_{\bxi}\bv}_{\bbL^2_{\per}}^2, \quad \text{for any} \ \  \bv\in\bbH^1_{\per}\,.
\]
This can be compared with \cite[Theorem 2.3]{CostabelDauge99}. Note that in our case, there is no boundary, which greatly simplifies the analysis.
\end{remark}

\section{Translationally invariant periodic electric permittivity}
\label{sec:pertrans}
In this section, in addition to the boundedness condition \eqref{eq:epsb}, we assume that $\epsilon$ is translationally invariant in the $z$ direction and periodic with respect to the two-dimensional lattice $\cKp$, see \eqref{eq:epstrans}.

The invariance with respect to $z$ can be seen as periodicity of period $0$ in that direction, leading to an unbounded Brillouin zone $\cB=\cBp\times\bbR$ where $\cBp\subset\bbR^2$ is the first Brillouin zone of the two-dimensional lattice $\cKp$.  Likewise the Floquet-Bloch transform degenerates into the usual Floquet-Bloch transform in variables $\bx'=(x,y)$ and partial Fourier transform in variable $z$, \cite[Annexe B]{flissthese}.  This is the reason why the reduction of problem \eqref{maxwell12} to a family of operators $L_{\bxi}$ with compact resolvent has now two steps:
\begin{enumerate}
\item Consider the Ansatz for any chosen constant $\beta\in\bbR$
\beq
\label{zinv0}
	\bH(x,y,z) = \bh(x,y) \,\rme^{i \beta z},
\eeq
to obtain a problem that is posed on $\bbR^2$ instead of $\bbR^3$.
Some authors (e.g. \cite{dobson99,giani12,soussi}) assume further that
$\beta = 0$ so that Maxwell's equations decouple into the so-called
\emph{transverse electric} (TE) and \emph{transverse magnetic} (TM)
mode problems.  We do not make this assumption here. Neither will we
reduce the problem to the linear Schr\"odinger equation as in \cite{norton2010}.
\item The constant $\beta$ being chosen, perform the Floquet-Bloch transform in transverse variables $\bx'$, leading to the family of operators, 
\beq
\label{zinv1p}
	\bigl\{ L\xib : \bxip \in \cBp \bigr\},
\eeq
parameterised by $\bxip$, where $L\xib$ 
is the self-adjoint operator 
$$
	L\xib := \nabla\xib \times \left(\tsfrac{1}{n^2} \nabla\xib \times\, \cdot\,\right) 
	\quad \mbox{and} \quad 
	\nabla\xib := 
	(\tsfrac{\partial}{\partial x}, \tsfrac{\partial}{\partial y}, 0) 
	+ i(\bxi'\!,\beta),
$$
operating on the space of periodic functions in two dimensions:
\begin{equation}
\label{eq:Vxip}
	\bV\xib := \{ \bv \in \bbL^2(Q') : 
	\nabla\xib \times \bv \in \bbL^2(Q'), \; \nabla\xib \cdot \bv = 0 \}
\end{equation}
Here $Q'=\bbR^2/\cKp$ and $\bbL^2(Q')$ identifies with the two-dimensional space $\bbL^2_{\per}$ relative to the lattice $\cKp$.
\end{enumerate}

\noindent
Let $a\xib : \bV\xib \times \bV\xib \rightarrow \bbC$ be the sesquilinear form associated with $L\xib$
\begin{equation*}
	a\xib(\bu,\bv) := \int_{Q'} \tsfrac{1}{n^2}\, \nabla\xib
        \times \bu \ \cdot\ \overline{ \nabla\xib \times \bv} \; \dd x \dd y,
        \quad \bu,\bv\in\bV\xib \,.
\end{equation*}
The main result of this section is very similar to Theorem \ref{th:2}.

\begin{theorem}
\label{th:3}
Let the magnetic permeability $\mu=\mu_0$ be constant and let the electric permittivity $\epsilon$ be periodic, translationally invariant \eqref{eq:epstrans}, and satisfy the boundedness conditions \eqref{eq:epsb}. Then for any $\beta\in\bbR$ and any $\bxip$ in the Brillouin zone $\cBp$, the variational space $\bV\xib$ \eqref{eq:Vxip} is contained in the Sobolev space $\bbH^1(Q')$. Moreover $a\xib$ satisfies a G\r{a}rding inequality: for any $\bv \in \bV\xib$\,, $\bxip \in \cBp$ and $\beta \in \bbR$
\begin{equation*}
   \nrm{n}^2_{\rL^\infty(Q')} \, a\xib(\bv,\bv) + (3|\bxip|^2+1)\nrm{\bv}_{\bbL^2(Q')}^2 \geq 
   \tsfrac12\nrm{\bv}_{\bbH^1(Q')}^2 + \beta^2 \nrm{\bv}_{\bbL^2(Q')}^2.
\end{equation*}
The spectrum of $L\xib$ is discrete and formed by a sequence of nonnegative eigenvalues.
\end{theorem}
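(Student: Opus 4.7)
The plan is to mirror the proof of Theorem \ref{th:2}, adapting each step to the two-dimensional periodic setting while tracking the additional parameter $\beta$ that enters through the phase $\rme^{i\beta z}$ in the Ansatz \eqref{zinv0}. For the embedding $\bV\xib\subset\bbH^1(Q')$, the quickest route is to lift back to three dimensions: given $\bh\in\bV\xib$, set $\bH(x,y,z):=\bh(x,y)\,\rme^{i\beta z}$. Then $\bH\in\bbL^2_\loc(\bbR^3)$, and the identities $\nabla\times\bH=(\nabla\xib\times\bh)\,\rme^{i\beta z}$ and $\nabla\cdot\bH=(\nabla\xib\cdot\bh)\,\rme^{i\beta z}=0$ place $\bH$ in the space of Lemma \ref{lem:1}. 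Hence $\bH\in\bbH^1_\loc(\bbR^3)$, and evaluating the gradient on any slab $Q'\times(0,1)$ shows $\bh\in\bbH^1(Q')$. Alternatively one may argue directly on $Q'$ via Fourier series on the dual lattice of $\cKp$, which is the two-dimensional analogue of the reasoning behind Lemma \ref{lem:1}.

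For the G\r{a}rding inequality, I would start from the elementary bound $\nrm{n}^2_{\rL^\infty(Q')}\,a\xib(\bv,\bv)\ge\nrm{\nabla\xib\times\bv}_{\bbL^2(Q')}^2$. Since $Q'$ is a compact manifold without boundary, integration by parts on $\bbH^1(Q')^3$ produces the identity
\begin{equation*}
   \nrm{\nabla\xib\times\bv}_{\bbL^2(Q')}^2+\nrm{\nabla\xib\cdot\bv}_{\bbL^2(Q')}^2=\nrm{\nabla\xib\bv}_{\bbL^2(Q')}^2,
\end{equation*}
the two-dimensional counterpart of the identity used in Theorem \ref{th:2}. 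Because $\nabla\xib$ only contains $\partial_x$ and $\partial_y$ as genuine derivatives while the $z$-component reduces to the pure multiplier $i\beta$, the right-hand side splits componentwise into an in-plane part plus an isolated contribution $\beta^2\nrm{\bv}_{\bbL^2(Q')}^2$. The arithmetic--geometric mean inequality applied only to the $x$- and $y$-derivatives then bounds the in-plane part from below by $\tfrac12\nrm{\nablap\bv}_{\bbL^2(Q')}^2-c|\bxip|^2\nrm{\bv}_{\bbL^2(Q')}^2$ for a suitable constant $c$, while the $\beta^2\nrm{\bv}^2$ term is left untouched. Using $\nabla\xib\cdot\bv=0$ for $\bv\in\bV\xib$ to drop the divergence term and rearranging then delivers the announced inequality.

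The spectral conclusion follows exactly as in Theorem \ref{th:2}: the G\r{a}rding inequality combined with the Rellich compact embedding $\bbH^1(Q')\hookrightarrow\bbL^2(Q')$ on the compact torus $Q'$ makes $L\xib$ a semibounded self-adjoint operator with compact resolvent, so its spectrum is a nondecreasing sequence of nonnegative eigenvalues accumulating only at $+\infty$. The only mildly delicate point is to ensure that the $\beta^2\nrm{\bv}^2_{\bbL^2(Q')}$ contribution survives as an additive term on the right-hand side rather than being absorbed into the $|\bxip|^2$ correction; this is automatic once one writes $\nrm{\nabla\xib\bv}^2$ in Cartesian components and applies the arithmetic--geometric mean only to the genuine in-plane derivatives $\partial_x$ and $\partial_y$.
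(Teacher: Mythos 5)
Your proposal is correct and follows essentially the same route as the paper, which simply states that the proof ``follows the same lines as Theorem \ref{th:2}''; your elaboration of how the $z$-component of $\nabla\xib$ acts as the pure multiplier $i\beta$, so that $\beta^2\nrm{\bv}^2_{\bbL^2(Q')}$ splits off exactly from $\nrm{\nabla\xib\bv}^2$ before the arithmetic--geometric mean step, is precisely the point that makes the adaptation work. One small correction: with the lift $\bH=\bh\,\rme^{i\beta z}$ alone the identities $\nabla\times\bH=(\nabla\xib\times\bh)\rme^{i\beta z}$ and $\nabla\cdot\bH=(\nabla\xib\cdot\bh)\rme^{i\beta z}$ are not exact (they miss the $i\bxip$ contribution), so you should either include the full Bloch phase $\rme^{i(\bxip\cdot\bx'+\beta z)}$ in the lift or, as in the proof of Theorem \ref{th:2}, observe that the twisted and untwisted curl/divergence conditions are equivalent because they differ by bounded zeroth-order terms.
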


The proof follows the same lines as Theorem \ref{th:2}. Since the embedding of $\bbH^1(Q')$ into $\bbL^2(Q')$ is compact, we deduce the spectral properties of $L\xib$.

\section{Piecewise constant and periodic permittivity on a polyhedral partition}
\label{sec:reg}

Here we return to the primitive Maxwell equations \eqref{maxwell012} and drop for the moment the assumption of translational invariance again. We assume that $\epsilon$ and $\mu$ are piecewise constant and periodic with respect to the lattice $\cKt$, which determines a periodic partition $\cP= \{ P_j \}_{j=1}^J$ of $\bbR^3$ into a finite set of unbounded Lipschitz {\em polyhedral domains}\footnote{We call [Lipschitz] polyhedral domain any [Lipschitz] open set with piecewise plane boundary. The singular points of the boundary form the edges and the corners. } $P_1,\dotsc,P_J$, such that
\begin{subequations}
\label{period}
\begin{equation}
\label{period1}
	\bbR^3 = \overline{ \cup_j P_j },\quad 
	P_j+\bk=P_j,\  \forall\bk\in\cKt,\ \forall j,
	\quad \mbox{and} \quad
	P_i \cap P_j = \emptyset \mbox{ if $i \neq j$},
\end{equation}
and 
\begin{equation}
\label{period2}
	\epsilon = \epsilon_j, \quad \mu = \mu_j \ \  \mbox{on $P_j$},\quad
	\mbox{with $\epsilon_j$ and $\mu_j$ positive constants.}
\end{equation}
\end{subequations}
Note that the quotient sets
\[
   Q_j :=P_j/\cKt,\quad j=1,\ldots,J,
\]
make sense and determine a finite partition into (bounded) polyhedral subdomains of the torus $Q=\bbR^3/\cKt$.

We study the regularity of solutions $(\bE,\bH)$ of system \eqref{maxwell012}.  Associated with partition $\cP$ we define the piecewise Sobolev spaces
\begin{gather*}
	\rP\rH^s_\loc(\bbR^3,\cP) := 
	\{ u \in \rL^2_\loc(\bbR^3) : \ \ u|_{P_j} \in \rH^s_\loc(P_j) \}, \qquad s \geq 0,\\
	\rP\rH^s(Q,\cP) := \{ u \in \rL^2(Q) : \ \ u|_{Q_j} \in \rH^s(Q_j) \}, \qquad s \geq 0.
\end{gather*}
The regularity results of \cite{costabel} adapt in the following
way. In view of its use for $\alpha = \epsilon$ or $\alpha = \mu$ let
us make the following definition.

\begin{definition}
\label{def:sigma}
For $\alpha$ piecewise constant and positive on the partition $\cP$ of the torus $Q$,
define an operator $\Delta_\alpha : H^1(Q) \rightarrow H^{-1}(Q)$ by
$$
	\Delta_\alpha u = \nabla \cdot (\alpha \nabla u ),\quad
        \mbox{for all } \ u \in \rH^1(Q).	
$$
Associated with this operator let $\sigma_\alpha$ (for $\alpha = \epsilon$ or $\mu$) be the supremum of $s >0$ ($s \neq 1/2$) such that 
\beq
\label{regeq2}
	u \in \rH^1(Q) \mbox{ and } \Delta_\alpha u \in \rH^{-1+s} 
	\quad \Rightarrow \quad u \in \rP\rH^{1 + s}(Q,\cP).
\eeq
\end{definition}
The values of $\sigma_\alpha$ depend on the \emph{singular exponents} at interface edges and corners.

\begin{remark}
\label{rem5.2}
If $\alpha$ takes two distinct values in the neighbourhood of an
interface edge between two materials, an explicit formula for singular
exponents shows that $\sigma_\alpha<1$ {(cf.~\cite{CostabelStephan85} and \cite[Th.8.1]{costabel})}.
\end{remark}
 
Results on {\em interior regularity} of polyhedral transmission
problems apply to our situation since we do not have any external
boundaries, and thus no exterior boundary conditions.  The following
theorem is adapted from \cite[Thm. 7.1]{costabel}.  As usual, we
denote $\rP\rH^s_\loc(\bbR^3,\cP)^3$ by $\rP\bbH^s_\loc(\bbR^3,\cP)$.

\begin{theorem}
\label{th:4}
Let $(\bE, \bH) \in \bbL^2_{\loc}(\bbR^3) \times \bbL^2_{\loc}(\bbR^3)$ satisfy equations \eqref{maxwell012} with $\omega \neq 0$. Then $\bE$ and $\bH$ have the following regularity:
\begin{align}
\label{eq:E1}
	&\bE \in \rP\bbH^s_{\loc}(\bbR^3,\cP), \qquad 
	\mbox{for all $s < \min \{ \sigma_\epsilon, \sigma_\mu + 1\}$,} \\
\label{eq:H1}
	&\bH \in \rP\bbH^s_{\loc}(\bbR^3,\cP), \qquad 
	\mbox{for all $s < \min \{ \sigma_\mu, \sigma_\epsilon + 1 \}$}.
\end{align}
\end{theorem}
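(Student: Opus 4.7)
The plan is to adapt the regular-decomposition technique of \cite{costabel}, exploiting the symmetric roles of $\bE$ and $\bH$ in \eqref{maxwell012} to deduce the two estimates \eqref{eq:E1}--\eqref{eq:H1} in parallel. I will sketch the argument for $\bH$; the estimate for $\bE$ follows upon interchanging $\epsilon$ and $\mu$. Since the claim is local, multiply $\bH$ by a smooth cut-off $\chi\in\cD(\bbR^3)$ supported in some large fundamental domain, and work with the compactly supported function $\chi\bH$. From \eqref{maxwell012} one has $\nabla\times\bH=-i\omega\epsilon\bE\in\bbL^2_\loc$ and $\nabla\cdot(\mu\bH)=0$, so $\chi\bH$ satisfies $\nabla\times(\chi\bH)\in\bbL^2$ and $\nabla\cdot(\mu\chi\bH)=\mu\nabla\chi\cdot\bH\in\rL^2$, each modulo cut-off commutators that are smooth and of order zero.

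Next I would perform a Hodge-type decomposition
\[
\chi\bH = \nabla\phi + \bw, \qquad \phi\in\rH^1,\ \bw\in\bbH^1,
\]
obtained by lifting the curl data through a vector Poisson problem on a sufficiently large torus and taking $\phi$ to be the residual scalar potential. The potential $\phi$ then satisfies a scalar transmission problem
\[
\nabla\cdot(\mu\nabla\phi) = g,
\]
whose right-hand side $g$ is controlled by the cut-off commutator and by the current regularity of $\bw$. Definition~\ref{def:sigma} applied with $\alpha=\mu$ yields $\phi\in\rP\rH^{1+s}$, and therefore $\nabla\phi\in\rP\bbH^s$, for every $s<\sigma_\mu$.

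For the regular part $\bw$, the key observation is that, up to smooth lower-order terms, its curl equals $-i\omega\chi\epsilon\bE$. Since $\epsilon$ is piecewise constant, this inherits the piecewise regularity of $\bE$: if $\bE\in\rP\bbH^{s_E}_\loc(\bbR^3,\cP)$, then the vector Poisson problem defining $\bw$ yields $\bw\in\rP\bbH^{1+s_E}$. Combined with the bound on $\nabla\phi$ this gives $\chi\bH\in\rP\bbH^s$ for every $s<\min\{\sigma_\mu,\,s_E+1\}$. Running the symmetric argument with the roles of $\mu$ and $\epsilon$ interchanged gives $\chi\bE\in\rP\bbH^s$ for every $s<\min\{\sigma_\epsilon,\,s_H+1\}$. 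Starting from the crude bound $s_E=s_H=0$ provided by the $\bbL^2_\loc$ hypothesis, a single bootstrap step already delivers the sharp exponents \eqref{eq:E1}--\eqref{eq:H1}.

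The main obstacle I expect is not the bootstrap itself but the careful execution of the Hodge-type decomposition across the polyhedral interfaces: one must verify that $\bw$ belongs globally to $\bbH^1$ (and not merely piecewise) and that the transmission problem satisfied by $\phi$ is exactly the one underlying Definition~\ref{def:sigma}, so that the constant $\sigma_\mu$ can legitimately be invoked. On the torus the absence of external boundaries removes all trace-handling complications, but the interface singularities at edges and corners remain, and this is where the detailed machinery of \cite{costabel} is required; its main role here is simply to certify that the scalar transmission estimate \eqref{regeq2} can be transferred back to the vector field through the decomposition.
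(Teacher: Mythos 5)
The paper does not actually reprove this statement: Theorem \ref{th:4} is quoted as a direct application of the classification of edge and corner singularities of Maxwell interface problems in \cite{costabel} (Thm.~7.1 there), the only observation needed being that interior regularity applies since the torus has no exterior boundary. Your sketch correctly identifies where each exponent comes from --- the divergence constraint $\nabla\cdot(\mu\bH)=0$ is responsible for $\sigma_\mu$, and the curl relation $\nabla\times\bH=-i\omega\epsilon\bE$ for the shifted exponent $\sigma_\epsilon+1$ --- but the decomposition argument you propose has a genuine gap at both of its load-bearing steps.

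First, the scalar step. If $\bw\in\bbH^1$ globally and $\mu$ jumps across the interfaces, then $\nabla\cdot(\mu\bw)$ is not an $\rL^2$ function: it contains the surface distribution $[\mu]\,w_n\,\delta_\Sigma$ carried by the interfaces, which lies only in $\rH^{-1/2-\eta}$. So the transmission problem $\Delta_\mu\phi=g$ satisfied by your potential has data in $\rH^{-1+s}$ only for $s<1/2$, and Definition \ref{def:sigma} then delivers $\phi\in\rP\rH^{1+s}$ only for $s<\min\{\sigma_\mu,1/2\}$ --- strictly weaker than the claimed $\sigma_\mu$, since $\sigma_\mu>1/2$ in general. (Reversing the order --- solving for $\phi$ first with the genuinely $\rL^2$ data $\mu\nabla\chi\cdot\bH$ --- repairs this step but transfers the difficulty to $\bw$, which must then satisfy $\Div(\mu\bw)=0$ with the jumping coefficient and is no longer produced by a flat vector Poisson problem.) Second, and more fundamentally, the step ``$\curl\bw=-i\omega\chi\epsilon\bE\in\rP\bbH^{s_E}$ implies $\bw\in\rP\bbH^{1+s_E}$'' does not follow from a constant-coefficient lifting: the data is only piecewise regular and its components jump across the interfaces, and the assertion that a field with the correct divergence whose curl is piecewise $\rH^{s}$ is itself piecewise $\rH^{1+s}$ (while staying in the global energy space) is essentially the content of the Maxwell interface regularity theorem you are trying to prove. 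This is precisely why \cite{costabel} does not argue by regular decomposition but by a Mellin analysis at the interface edges and corners, showing that the Maxwell singularities are gradients of the scalar $\Delta_\epsilon$- and $\Delta_\mu$-singularities together with terms shifted by one degree; Theorem \ref{th:4} then reads off the exponents from that classification.
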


This result is a direct application of \cite[Thm. 7.1]{costabel}. It relies on the analysis of edge and corner singularities.  These depend on the interface edge and corner singularities of the scalar operators $\Delta_\epsilon$ and $\Delta_\mu$.

\begin{remark}\mbox{} {It follows from Remark \ref{rem5.2} that}\\[0.5ex]
1) if $\epsilon$ has interfacial edges then $\sigma_\epsilon < 1$ and so, in general, $\bE \notin \rP\bbH^1_{\loc}(\bbR^3,\cP)$.\vspace{0.5ex}

\noindent
2) Likewise if $\mu$ has interfacial edges then $\sigma_\mu < 1$ and, in general, $\bH \notin \rP\bbH^1_{\loc}(\bbR^3,\cP)$.\vspace{0.5ex}

\noindent
3) If $\omega=0$, the regularity results \eqref{eq:E1}-\eqref{eq:H1}
still hold provided that we complete the system \eqref{maxwell012} by
the gauge conditions $\Div\epsilon\bE=0$ and $\Div\mu\bH=0$ {(cf.~\cite{costabel})}.
\end{remark}

In the case of a non-magnetic material, $\mu=\mu_0$. Since $\mu$ is constant the operator $\Delta_\mu$ (Definition \ref{def:sigma}) is simply $\mu_0 \Delta : H^1(Q) \rightarrow H^{-1}(Q)$. Thus we are reduced to the ordinary Laplace operator. The standard theory of elliptic operators yields that there does not exist an upper bound on the $s$ for which \eqref{regeq2} holds, so we may formally take $\sigma_\mu = \infty$ and the regularity result becomes the following.

\begin{corollary}
\label{cor:1th:3}
Let $(\bE, \bH) \in \bbL^2_{\loc}(\bbR^3) \times \bbL^2_{\loc}(\bbR^3)$ satisfy equations \eqref{maxwell012} with $\omega \neq 0$. We assume that $\mu=\mu_0$. Then $\bE$ and $\bH$ have the following regularity:
\begin{align}
\label{eq:E2}
	&\bE \in \rP\bbH^s_{\loc}(\bbR^3,\cP) \qquad 
	\mbox{for all $s < \sigma_\epsilon$} \\
\label{eq:H2}
	&\bH \in \rP\bbH^s_{\loc}(\bbR^3,\cP) \qquad 
	\mbox{for all $s < \sigma_\epsilon + 1 $}.
\end{align}
\end{corollary}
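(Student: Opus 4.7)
The plan is to derive this corollary as an immediate specialization of Theorem \ref{th:4} in which the parameter $\sigma_\mu$ can be sent to infinity. The only nontrivial task is to justify that when $\mu$ is constant, there is no finite upper bound for the $s$ appearing in condition \eqref{regeq2} with $\alpha=\mu$.

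First, I would invoke Theorem \ref{th:4}, which gives $\bE \in \rP\bbH^s_\loc(\bbR^3,\cP)$ for all $s < \min\{\sigma_\epsilon,\sigma_\mu+1\}$ and $\bH \in \rP\bbH^s_\loc(\bbR^3,\cP)$ for all $s < \min\{\sigma_\mu,\sigma_\epsilon+1\}$. Once I show that $\sigma_\mu$ may be taken arbitrarily large, both minima collapse, yielding $\sigma_\epsilon$ and $\sigma_\epsilon+1$ respectively, which is exactly \eqref{eq:E2}--\eqref{eq:H2}.

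To prove $\sigma_\mu = +\infty$, I would note that because $\mu \equiv \mu_0$ is constant, the operator $\Delta_\mu$ from Definition \ref{def:sigma} is simply $\mu_0 \Delta$, the (scaled) Laplace--Beltrami operator on the flat torus $Q = \bbR^3/\cKt$, which is a smooth compact manifold without boundary. Here the partition $\cP$ plays no role since the coefficient has no jumps. I would then invoke standard elliptic regularity on a closed manifold: if $u \in \rH^1(Q)$ satisfies $\mu_0 \Delta u \in \rH^{-1+s}(Q)$ for some $s > 0$, then $u \in \rH^{1+s}(Q) \subset \rP\rH^{1+s}(Q,\cP)$. This regularity gain has no ceiling on $s$, so the supremum in Definition \ref{def:sigma} is $+\infty$, and we may formally set $\sigma_\mu = +\infty$ in \eqref{eq:E1}--\eqref{eq:H1}.

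There is really no obstacle here: the non-trivial content (which has already been packaged into Theorem \ref{th:4}) is the analysis of corner and edge singularities of the coupled transmission problem, and the corollary simply records the observation that dropping the magnetic interfaces eliminates one source of singular exponents, so that only the singular exponents of $\Delta_\epsilon$ remain to limit the regularity. In particular the $+1$ bonus in the $\bH$ estimate reflects the fact that $\nabla \times \bH = -i\omega\epsilon \bE$ has the regularity of $\bE$, and the associated Hodge-type system for $\bH$ (with $\nabla \cdot \bH = 0$ and smooth $\mu$) gains one full derivative.
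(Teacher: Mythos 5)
Your argument is exactly the one the paper uses: it observes that for constant $\mu$ the operator $\Delta_\mu$ of Definition \ref{def:sigma} reduces to $\mu_0\Delta$, so standard elliptic regularity gives no upper bound on $s$ in \eqref{regeq2}, one formally sets $\sigma_\mu=\infty$, and the minima in \eqref{eq:E1}--\eqref{eq:H1} collapse to $\sigma_\epsilon$ and $\sigma_\epsilon+1$. Your proposal is correct and matches the paper's reasoning.
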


This result implies that the regularity of the magnetic field may be a whole degree better than the electric field. This is a very good justification for posing the original problem \eqref{maxwell012} in terms of only the magnetic field \eqref{maxwell12}, because we would expect numerical methods to converge faster to the more regular magnetic field.

As a corollary of the previous statement, we obtain the regularity of the eigenvectors of the Floquet operators $L_{\bxi}$,  cf.\ Theorem \ref{th:2}.

\begin{theorem}
\label{th:5}
Let the magnetic permeability $\mu = \mu_0$ be constant and let the electric permittivity be piecewise constant periodic over a polyhedral partition $\cP$ of the periodicity cell $Q$ \eqref{period}. Let $\bxi$ belong to the first Brillouin zone $\cB$. Then any eigenvector $\bU\in\bbH^1(Q)$ of the operator $L_{\bxi}$ (cf.\ Theorem {\em\ref{th:2}}) satisfies
\[
   \bU\in\rP\bbH^{s}(Q,\cP) \quad \mbox{for any \ $s < \sigma_\epsilon + 1$}
\]
with $\sigma_\epsilon$ the number introduced in Definition {\em\ref{def:sigma}}.
\end{theorem}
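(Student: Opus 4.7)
The plan is to reduce the eigenvector regularity question on the compact manifold $Q$ to the already established full-space regularity result for Maxwell solutions, Corollary \ref{cor:1th:3}, via the Floquet--Bloch Ansatz \eqref{eq:Hu}.

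First, given an eigenvector $\bU\in\bV_{\bxi}\subset\bbH^1(Q)$ with $L_{\bxi}\bU=\kappa^2\bU$, I would undo the Floquet transform by setting
\[
   \bH(\bx) := \bU(\bx)\,\rme^{i\bxi\cdot\bx},\qquad \bx\in\bbR^3.
\]
Since $\bU$ is $\cKt$-periodic and lies in $\bbH^1(Q)$, the function $\bH$ belongs to $\bbH^1_{\loc}(\bbR^3)$. The fundamental intertwining property $\nabla(\bu\,\rme^{i\bxi\cdot\bx})=(\nabla_{\bxi}\bu)\rme^{i\bxi\cdot\bx}$ (and analogously for curl and divergence) converts $L_{\bxi}\bU=\kappa^2\bU$ into $\nabla\times(n^{-2}\nabla\times\bH)=\kappa^2\bH$, while the side condition $\nabla_{\bxi}\cdot\bU=0$ becomes $\nabla\cdot\bH=0$. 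Thus $\bH$ solves the reduced Maxwell system \eqref{maxwell12} on all of $\bbR^3$.

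Next I would split according to the value of $\kappa$. In the generic case $\kappa\neq 0$, I define $\omega:=\kappa/\sqrt{\epsilon_0\mu_0}\neq 0$ and $\bE:=\tfrac{i}{\omega\epsilon}\nabla\times\bH$. Part (ii) of Theorem \ref{th:1} then guarantees that $(\bE,\bH)$ is a genuine solution of the full Maxwell system \eqref{maxwell012} with a nonzero frequency. Since $\epsilon$ is piecewise constant on the periodic polyhedral partition $\cP$ and $\mu=\mu_0$ is constant, Corollary \ref{cor:1th:3} applies and yields
\[
   \bH\in\rP\bbH^s_{\loc}(\bbR^3,\cP)\qquad\text{for every }s<\sigma_\epsilon+1.
\]
Multiplying back by the smooth phase $\rme^{-i\bxi\cdot\bx}$, which preserves Sobolev regularity on each subdomain $P_j$, transfers this to $\bU=\bH\,\rme^{-i\bxi\cdot\bx}\in\rP\bbH^s_{\loc}(\bbR^3,\cP)$, which by $\cKt$-periodicity of both $\bU$ and $\cP$ is the same as $\bU\in\rP\bbH^s(Q,\cP)$.

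The exceptional case $\kappa=0$ is actually the easier one: testing $L_{\bxi}\bU=0$ against $\bU$ gives $\int_{Q}n^{-2}|\nabla_{\bxi}\times\bU|^2=0$, so $\nabla_{\bxi}\times\bU=0$ in addition to $\nabla_{\bxi}\cdot\bU=0$; the associated $\bH$ has vanishing curl and divergence, hence each component is harmonic on $\bbR^3$ and therefore $C^\infty$, so the asserted regularity is trivial. The only genuine friction I anticipate is bookkeeping: one must check carefully that the Floquet Ansatz really does intertwine $L_{\bxi}$ with the original Maxwell operator in the sense of distributions (not merely in a formal calculus), and that the hypothesis $\omega\neq 0$ of Corollary \ref{cor:1th:3} is legitimately avoided when $\kappa=0$ — both routine but essential to spell out.
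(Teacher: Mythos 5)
Your proof follows essentially the same route as the paper's: undo the Floquet--Bloch transform via $\bH=\bU\,\rme^{i\bxi\cdot\bx}$, recover a solution $(\bE,\bH)$ of \eqref{maxwell012} by setting $\bE=\frac{i}{\omega\epsilon}\nabla\times\bH$, and invoke Corollary \ref{cor:1th:3}. Your separate treatment of the degenerate case $\kappa=0$ (where $\bH$ is curl- and divergence-free, hence harmonic and smooth) is a small extra precaution that the paper's proof omits, but it does not change the argument.
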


\begin{proof}
The eigenvector $\bU$ is periodic and satisfies the equation $L_{\bxi}\bU=\kappa^2\bU$ for some real $\kappa$, where we recall
$$
	L_{\bxi} := \nabla_{\bxi} \times (\tsfrac{1}{n^2} \nabla_{\bxi} \times \cdot) 
	\quad\mbox{with}\quad
	\nabla_{\bxi} := 
	(\tsfrac{\partial}{\partial x}, \tsfrac{\partial}{\partial y}, \tsfrac{\partial}{\partial z}) 
	+ i\bxi
$$
Setting (cf. \eqref{eq:Hu})
\[
	\bH(\bx) = \bU(\bx)\, \rme^{i \bxi \cdot \bx}
\]
we obtain that $\bH$ is a Blochwave belonging to the space
$$
	\bbH^1_{\bxi}(\bbR^3) := 
	\{ \bF \in \bbH^1_\loc(\bbR^3) : \ 
	\bF(\bx + \bk) = \rme^{i \bxi \cdot \bk} \bF(\bx)\ \ 
	\forall \bx \in \bbR^3, \ \forall \bk \in \cKt \} 	
$$
and satisfying the equations \eqref{maxwell12}. Setting 
\[
   \bE = \tsfrac{i}{\omega\epsilon} \nabla\times\bH
\]
we obtain a solution of equations \eqref{maxwell012} and may apply Corollary \ref{cor:1th:3}. 
\end{proof}

\section{Translationally invariant, piecewise constant and periodic electric permittivity on a polygonal transverse partition}
\label{sec:reg+}

This section is devoted to deriving a bespoke regularity result for
PCFs.  Here the results in \cite{costabel} can be improved since we
have the special situation where the magnetic permeability $\mu =
\mu_0$ is constant and the electric permittivity $\epsilon = n^2
\epsilon_0$ is invariant with respect to $z$, as well as bi-periodic 
in the $xy$-plane. 

Our aim now is to further refine Theorem \ref{th:4} under these additional assumptions. Our new result will focus on the $x$ and $y$ components of the magnetic field since these will actually have more regularity and are often sufficient for photonic crystal fibre modelling, see e.g. \cite{pearce2005adaptivecurvilinear,norton2012apnum}. The other components of the magnetic and electric field can then be recovered in an easy and more accurate way in a post-processing procedure (see \cite{nortonthesis} for details). It also complements the regularity theory for the $z$-components of $\bE$ and $\bH$, developed under a supplementary assumption in \cite{elschner}. We will come back to this in the next section.

In addition to assumptions \eqref{period} ($\epsilon$ periodic and piecewise constant on a polyhedral partition), let us also assume that $\epsilon$ is translationally invariant with respect to $z$ so that
$$
	\epsilon(\bx) = \epsilon(\bx'), \qquad 
	\mbox{for all \ $\bx = (x,y,z) \in \bbR^3$, \ with \ $\bx' = (x,y)$}.
$$
Then, the sets $P_j$ of the polyhedral partition are translationally invariant too, so they have the form
\[
   P_j = P'_j\times\bbR,\quad \ \mbox{with \ $P'_j$ \ polygonal in \ $\bbR^2$.}
\]
We denote by $\cP'$ the partition $\{ P'_j \}_{j=1}^J$ and by $\rP\rH^s(\cdot,\cP')$ the corresponding piecewise Sobolev spaces:
\begin{gather*}
	\rP\rH^s_\loc(\bbR^2,\cP') := 
	\{ u \in \rL^2_\loc(\bbR^2) : \ \ u|_{P'_j} \in \rH^s_\loc(P'_j) \}, \qquad s \geq 0,\\
	\rP\rH^s(Q',\cP') := \{ u \in \rL^2(Q') : \ \ u|_{Q'_j} \in \rH^s(Q'_j) \}, \qquad s \geq 0.
\end{gather*}
Here $Q'$ is the two-dimensional torus $\bbR^2/\cKp$ and $Q'_j=P'_j/\cKp$.

As we have seen in \S\ref{sec:pertrans}, in the translationally invariant case we may reduce problem \eqref{maxwell12} to a simpler problem by using the Ansatz \eqref{zinv0} with fixed $\beta \in \bbR$,
$$
	\bH(\bx) = \bh(\bx') \,\rme^{i \beta z}.
$$
Then, avoiding again to enter into the details of the Floquet-Bloch transform, 
we may further simplify the problem by setting 
\[
	\bh(\bx') = \bu(\bx')\, \rme^{i \bxip \cdot \bx'}, \quad \text{for some} \ \  \bxip \in \mathcal{B}',
\]
where $\bu$ is periodic and an eigenvector of the Floquet operator
$$
	L{\xib} := \nabla\xib \times \left(\tsfrac{1}{n^2} \nabla\xib \times\, \cdot\,\right) 
	\quad\mbox{with}\quad  
	\nabla\xib := 
	(\tsfrac{\partial}{\partial x}, \tsfrac{\partial}{\partial y}, 0) 
	+ i(\bxip\!,\beta) 
$$
and satisfies thus the equation $L\xib\bu=\kappa^2\bu$, for some real $\kappa$
(cf. Theorem~\ref{th:2}). Recalling from Theorem \ref{th:3} that $\bu \in \bbH^1(Q')$ we obtain that $\bh$ belongs to the space
$$
	\bbH^1_{\bxip}(\bbR^2) := 
	\{ \boldf \in \bbH^1_\loc(\bbR^2) : \ 
	\boldf(\bx' + \bk') = \rme^{i \bxip \cdot \bk'} \boldf(\bx')\ \ 
	\forall \bx' \in \bbR^2, \ \forall \bk' \in \cKp \} 	
$$
and satisfies the following equations in $\bbR^2$:
\begin{subequations}
\label{reg3}
\begin{align}
	\nabla'_{(0,0,\beta)} \times (\tsfrac{1}{n^2} \nabla'_{(0,0,\beta)} \times \bh ) 
	&= \kappa^2 \bh, \label{reg3a} \\
	\nabla'_{(0,0,\beta)} \cdot \bh &= 0. \label{reg3b}
\end{align}
\end{subequations}
Note that $\epsilon = \epsilon_0 n^2$ so that $n^2$ is piecewise constant on the same partition $\cP'$ as $\epsilon$.

Now recall that $\nabla'_{(0,0,\beta)} = (\tsfrac{\partial}{\partial x},\tsfrac{\partial}{\partial y},0) + i (0,0,\beta)$ and expand \eqref{reg3} to get
\begin{subequations}
\label{reg4}
\begin{align}
	\partial_y \tsfrac{1}{n^2} ( \partial_x h_y - \partial_y h_x ) - i \beta \tsfrac{1}{n^2} (i \beta h_x - \partial_x h_z) &= \kappa^2 h_x, \label{reg4a}\\
	-\partial_x \tsfrac{1}{n^2} ( \partial_x h_y - \partial_y h_x) + i \beta \tsfrac{1}{n^2} (\partial_y h_z - i \beta h_y) &= \kappa^2 h_y, \label{reg4b}\\
	\partial_x \tsfrac{1}{n^2} ( i \beta h_x - \partial_x h_z) - \partial_y \tsfrac{1}{n^2} (\partial_y h_z - i \beta h_y) & = \kappa^2 h_z, \label{reg4c}\\
	\partial_x h_x + \partial_y h_y + i \beta h_z = 0. \label{reg4d}
\end{align}
\end{subequations}
Using the $\rH^1$ regularity of $\bh$, it follows from \eqref{reg4a}, \eqref{reg4b} and \eqref{reg4d} that
\begin{align*}
	\partial_y \tsfrac{1}{n^2} ( \partial_x h_y - \partial_y h_x) & \in L^2_\loc(\bbR^2),\\
	\partial_x \tsfrac{1}{n^2} ( \partial_x h_y - \partial_y h_x) & \in L^2_\loc(\bbR^2),\\
	\partial_x h_x + \partial_y h_y & \in H^1_\loc(\bbR^2).
\end{align*}
Let us define $\boldf\,' = (f_x,f_y) \in \rL^2_\loc(\bbR^2)^2$ by the formulas
\begin{subequations}
\label{eq:bf}
   \begin{align}
   f_x = \kappa^2 h_x + 
   i \beta \tsfrac{1}{n^2} (i \beta h_x - \partial_x h_z) + i\beta\partial_x h_z, \\
   f_y = \kappa^2 h_y - 
   i \beta \tsfrac{1}{n^2} (\partial_y h_z - i \beta h_y) +  i\beta\partial_y h_z. 
   \end{align}
\end{subequations}
Hence, $\bh' = (h_x,h_y)$ satisfies the elliptic system,
\begin{align*}
	\partial_y \tsfrac{1}{n^2} ( \partial_x h_y - \partial_y h_x) 
	- \partial_x ( \partial_x h_x + \partial_y h_y) & = f_x,\\
	- \partial_x \tsfrac{1}{n^2} ( \partial_x h_y - \partial_y h_x) 
  -  \partial_y ( \partial_x h_x + \partial_y h_y) & = f_y,
\end{align*}
or in terms of two-dimensional $\Curl$, $\rm{curl}$, $\grad$ and $\Div$ operators
$$
	\Curl \tsfrac{1}{n^2} \curl \bh' - \grad \Div \bh' = \boldf\,'.
$$ 

We have thus reduced the regularity analysis for the eigenproblem \eqref{maxwell12} to the regularity analysis of the following problem
\begin{equation}
\label{eq:bh'}
   \bh' \in \rH^1_\loc(\bbR^2)^2 \quad\mbox{such that}\quad
   M \bh' = \boldf\,'\ \  \mbox{for some} \ \ \boldf\,' \in \rL^2_\loc(\bbR^2)^2
\end{equation}
where $M$ is the $2\times2$ operator
\begin{equation}
\label{eq:M}
   M := \Curl \tsfrac{1}{n^2} \curl - \grad \Div,
\end{equation}
acting on all of $\bbR^2$ and the refractive index $n$ is translationally invariant (in $z$) and piecewise constant periodic over a polygonal partition $\cP'$ of the periodicity cell $Q'$.

The operator $M$ defines an elliptic system and the bottleneck for optimal regularity comes from the corners of the subdomains $Q'_j$. Outside any neighbourhood of the corners we have optimal piecewise regularity, i.e. $\rP\rH^2$, but the norm may blow up near the corners. We now investigate the strength of the corner singularities. 
Our approach is to analyse the operator $M$ using the Kondrat'ev method
\cite{kondratiev} near the corners. We need some notation. Let $\cC$ be the set of corners $\bc$ of all subdomains $Q'_j$.

Let us choose a corner $\bc$. It suffices to prove $\rP\rH^{2 - \eta}$ regularity in a neighbourhood of this corner.  After a possible reordering of the subdomains, let us denote by $Q'_\ell$, $\ell=1,\ldots,\cL$, the subdomains containing $\bc$ in their boundaries.
In polar coordinates $(r,\theta)$ centred at $\bc$, we may assume that, for $r_0$ small enough
\begin{subequations}
\label{omegaell}
\begin{equation}
\label{omegaell1}
   Q'_\ell \cap \cB(\bc,r_0) = \{\bx\in\bbR^2 :\ \theta\in\Omega_\ell
   = (\omega_{\ell-1},\omega_{\ell}), \ r\in(0,r_0)\}
\end{equation}
with
\begin{equation}
\label{omegaell2}
   0=\omega_0<\omega_1<\ldots<\omega_{\cL}=2\pi.
\end{equation}
\end{subequations}
The Kondrat'ev method is very general and also applies to transmission problems, see \cite{Nicaise93,NicaiseSandig94}.  To obtain our desired regularity, we must prove that the Mellin symbol of $M$ at the corner $\bc$ is invertible in a certain strip of the complex plane. This method was further developed in \cite{dauge88} where equivalent, or more adapted, conditions are exhibited. This method was applied to Maxwell equations in \cite{costabel00,costabel}. Let us explain the latter method in the case of the operator $M$.

For $\lambda \in \bbC$, we need a space of {\em quasi-homogeneous functions of degree $\lambda$
and angular regularity $m\in\bbN\cup\{0\}$}
\begin{equation}
	S^\lambda_{[m]} := \Big\{ \Phi(r,\theta) = \sum_{q=0}^\mathcal{Q} r^\lambda \log^q r\, \phi_q(\theta) : 
	\\[-2.ex]
	\mathcal{Q} \in \bbN, \ \  \phi_q\in\rH^m(\bbR/2\pi\bbZ)\Big\}.
\end{equation}
The integer $\mathcal{Q}$ plays the role of a polynomial degree. It
{is problem specific} and cannot be specified in the general Ansatz.
Denote by $\bm{S}^\lambda_{[m]}$ the product $S^\lambda_{[m]} \times S^\lambda_{[m]}$.

We note that for $m=1$, functions $\bm{\Phi}\in \bm{S}^\lambda_{[1]}$ belong to $\rH^1(\cB(\bc,r_0))^2$ for $\Re\lambda>0$, but, even when $m\ge2$, do not belong to $\rP\rH^2(\cB(\bc,r_0),\cP')^2$ if $\Re\lambda<1$.
The singularities of problem \eqref{eq:bh'} are now sought in the space $\bm{S}^\lambda_{[1]}$, with $0<\Re\lambda<1$.  Define the space $\bm{Z}_M^\lambda$ to be
\begin{equation}
\label{eq:Z}
	\bm{Z}_M^\lambda := \{ \bm{\Phi} \in \bm{S}^\lambda_{[1]} : \ \ M \bm{\Phi} = 0 \}
	\quad\mbox{with}\quad 0<\Re\lambda<1.
\end{equation}
The ellipticity of $M$ implies that $\bm{Z}_M^\lambda$ is reduced to
$\{0\}$ for all but a finite set of $\lambda$ and that for each of these values of $\lambda$, $\bm{Z}_M^\lambda$ is finite dimensional. Then
\cite[\S1]{kondratiev} adapted to Sobolev spaces with real exponents
as in \cite{dauge88} implies that the solution $\bh'$ of \eqref{eq:bh'} expands\footnote{The introduction of a non-optimal regularity for the regular part ``$\bh'_0$ in $\rP\rH^{2-\eta}$ for all $\eta>0$'' allows a simplification of the statement. A sharp regularity ``$\bh'_0$ in $\rP\rH^{2}$'' would require a condition of invertibility of the Mellin symbol on the line $\Re\lambda=1$, or, more precisely, the condition of {\em injectivity modulo polynomials} as introduced in \cite{dauge88}. As we will see in the next section, the latter condition is not satisfied, whence the interest of the weak regularity statement for the regular part.} around the corner $\bc$ as
\begin{multline}
\label{eq:exp1}
   \bh' = \sum_{0<\Re\lambda<1} \bm{\Phi}^\lambda + \bh'_0 \\[-1ex] 
   \mbox{with}\quad
   \bm{\Phi}^\lambda\in\bm{Z}_M^\lambda \ \ \mbox{and}\ \ 
   \bh'_0\in\rP\rH^{2-\eta}(\cB(\bc,r_0),\cP')^2\ \ \forall\eta>0.
\end{multline}

However, we will now show that for the operator $M$ in \eqref{eq:M} the 
set $\bm{Z}_M^\lambda$ is reduced to $\{0\}$, for all $\lambda\in\bbC$ 
in the strip 
$0<\Re\lambda<1$, which immediately implies the following regularity result.

\begin{theorem}
\label{th:7}
Let $\beta\in\bbR$ and let $\bxip$ belong to the first Brillouin zone $\cB'$.
Assume that the refractive index $n$ is 
translationally invariant (in $z$) and piecewise constant periodic 
over a polygonal partition $\cP'$ of the periodicity cell $Q'$.
Now, consider the solution $\bh' \in \rH^1_\loc(\bbR^2)^2$ of \eqref{eq:bh'}
with $M$ as defined 
in \eqref{eq:M}, or equivalently the $x$ and $y$ components $\bh'=(h_x,h_y)$ 
of the solution $\bh\in\bbH^1_\loc(\bbR^2)$ of \eqref{reg3}.
Then
$$
	\bh' \in \rP\rH^{2 - \eta}_{\loc}(\bbR^2,\cP')^2 
	\qquad \mbox{for any \ $\eta>0$.}
$$
\end{theorem}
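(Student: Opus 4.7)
The plan is to prove the theorem by establishing that $\bm{Z}_M^\lambda = \{0\}$ for every $\lambda$ with $0<\Re\lambda<1$, since the Kondrat'ev expansion \eqref{eq:exp1} then forces the singular part of $\bh'$ to vanish and delivers the asserted $\rP\rH^{2-\eta}$ regularity. So fix $\bm{\Phi}\in\bm{Z}_M^\lambda$ and introduce the two scalar fields
\[
   \psi:=\curl\bm{\Phi},\qquad \phi:=\Div\bm{\Phi},
\]
both quasi-homogeneous of degree $\lambda-1$ with $-1<\Re(\lambda-1)<0$. Reading the distributional identity $M\bm{\Phi}=\Curl(\tsfrac{1}{n^2}\psi)-\grad\phi=0$ on $\cB(\bc,r_0)$ separates into (i) the classical equation $\Curl(\tsfrac{1}{n^2}\psi)=\grad\phi$ on each sector $\Omega_\ell$, and (ii) the cancellation of the Dirac measures produced on each interface ray. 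A direct jump computation yields $[\tsfrac{1}{n^2}\psi]\,\tau-[\phi]\,\nu=0$ along each ray, and the orthogonality of $\tau$ and $\nu$ decouples this into the two scalar conditions $[\tsfrac{1}{n^2}\psi]=0$ and $[\phi]=0$.

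The classical identity in (i) is precisely the Cauchy--Riemann system for $f_\ell:=\tsfrac{1}{n_\ell^2}\psi|_{\Omega_\ell}-i\phi|_{\Omega_\ell}$, so each $f_\ell$ is holomorphic on $\Omega_\ell$. The jump conditions from (ii) say that real and imaginary parts of $f$ agree across every interface ray, so the pieces $f_\ell$ glue into a single function $f$ which is continuous on $\cB(\bc,r_0)\setminus\{\bc\}$ and holomorphic off a finite union of real-analytic arcs; a standard Morera/Painlev\'e removability argument then upgrades $f$ to a function that is holomorphic on the entire punctured ball. Since $f$ is quasi-homogeneous of degree $\lambda-1$, one has $|zf(z)|\leq C\,r^{\Re\lambda}|\log r|^{\mathcal{Q}}\to 0$, so Riemann's removable singularity theorem extends $f$ holomorphically across $\bc$ itself. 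A function holomorphic on a disk admits only a Taylor expansion in non-negative integer powers of $z-\bc$, whereas $f$ is quasi-homogeneous of non-integer degree $\lambda-1\in(-1,0)$; matching the two expansions forces $f\equiv 0$, i.e.\ $\psi=\phi=0$.

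With $\curl\bm{\Phi}=0$ and $\bm{\Phi}\in\bbH^1_{\loc}$ on the simply connected ball $\cB(\bc,r_0)$, a scalar potential $u$ exists with $\bm{\Phi}=\grad u$, and $\Div\bm{\Phi}=0$ then gives $\Delta u=0$ throughout the ball. Radial integration shows that $u$ is quasi-homogeneous of degree $\lambda+1$ modulo an additive constant, while interior harmonicity forces $u$ to be smooth and representable by the harmonic polynomial series $\sum_{k\geq 0}r^k(a_k\cos k\theta+b_k\sin k\theta)$. Since $\Re(\lambda+1)\in(1,2)$ is not an integer, matching these two expansions collapses every coefficient to zero: $u$ is constant, hence $\bm{\Phi}=\grad u=0$, which is what we wanted.

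The main obstacle I anticipate is the clean justification of step (ii), namely turning the single distributional vector identity $M\bm{\Phi}=0$ into the two separate scalar jump conditions on $\tsfrac{1}{n^2}\psi$ and $\phi$; some additional care is also needed at the corner $\bc$, where several rays meet, but the angular $H^1$ regularity built into the space $\bm{S}^\lambda_{[1]}$ keeps the jump analysis purely local on each ray and the corner argument within reach of Riemann's theorem.
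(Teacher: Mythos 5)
Your proposal is correct and follows the paper's skeleton exactly at the outer level (kill $\bm{Z}_M^\lambda$ in the strip $0<\Re\lambda<1$, pass to $\Psi=\curl\bm{\Phi}$ and $\Pi=\Div\bm{\Phi}$, then finish with $\bm{\Phi}=\grad\varphi$, $\varphi$ harmonic and quasi-homogeneous of the non-integer degree $\lambda+1$, hence zero), but the middle step is handled by a genuinely different device. The paper simply applies $\curl$ and $\Div$ to the first-order identity $\Curl\tsfrac{1}{n^2}\Psi-\grad\Pi=0$: since $\curl\Curl=-\Delta$, $\curl\grad=0$, $\Div\Curl=0$ and $\Div\grad=\Delta$ hold as exact distributional identities, the interface contributions are annihilated automatically, one gets $\Delta(\tsfrac{1}{n^2}\Psi)=0$ and $\Delta\Pi=0$ on the punctured plane with no trace or jump analysis whatsoever, and the classification of $S_\Delta^{\lambda-1}$ (Lemma \ref{lem:SDelta}, item 1) finishes the step. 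You instead keep the first-order system, extract the two scalar jump conditions $[\tsfrac{1}{n^2}\Psi]=[\Pi]=0$ from the vanishing of the singular part of $M\bm{\Phi}$, recognise the sectorwise equations as Cauchy--Riemann for $f=\tsfrac{1}{n^2}\Psi-i\Pi$, and conclude by Morera/Painlev\'e gluing, Riemann removability, and the incompatibility of a Taylor expansion with non-integer quasi-homogeneity. This is an attractive, self-contained complex-analytic re-proof of the relevant cases of Lemma \ref{lem:SDelta}, and it makes the mechanism (a single holomorphic function of non-integer degree must vanish) very transparent. The price you pay is exactly the obstacle you flag: to write the jumps $[\tsfrac{1}{n^2}\Psi]$ and $[\Pi]$ you need one-sided traces of $\Psi$ and $\Pi$ on the rays, and these have a priori only $\rL^2$ angular regularity (the $\rH^1$ angular regularity sits on $\bm{\Phi}$, not on its first derivatives), so you must first invoke elliptic regularity up to the interface for the sectorwise constant-coefficient transmission problem --- or, more simply, run the paper's curl/div step first so that Weyl's lemma makes $\tsfrac{1}{n^2}\Psi$ and $\Pi$ genuinely harmonic (hence smooth) on the punctured ball, after which your holomorphy argument applies verbatim. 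With that one standard ingredient supplied, your proof is complete; also note that $\lambda$ may be complex, so the statements ``$\lambda-1\in(-1,0)$'' and ``$\Re(\lambda+1)\in(1,2)$ is not an integer'' should be read as ``$\lambda-1$ (resp. $\lambda+1$) is not an integer because its real part lies strictly between two consecutive integers'', which is all the matching arguments require.
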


To prove this theorem we will need to also introduce the spaces of quasi-homoge\-neous functions associated with the scalar Laplace operator: For $\mu\in\bbC$ let
\begin{equation}
\label{eq:SDelta}
   S_\Delta^\mu := \{\Phi \in S^\mu_{[0]} :\ \Delta\Phi = 0 
   \ \ \mbox{in}\ \ \bbR^2\setminus\{0\} \}.
\end{equation}
Again, the ellipticity of $\Delta$ implies that $S_\Delta^\mu$ is finite dimensional, and reduced to $\{0\}$ for all but a finite set of $\mu$. Each $\mu$ such that $S_\Delta^\mu$ is not trivial is associated with a maximal value $\mathcal{Q}^\mu_{\max}$ of the degree $\mathcal{Q}$. Moreover, still by elliptic regularity, $S_\Delta^\mu$ is contained in $S^\mu_{[m]}$ for any $m\in\bbN$.
These spaces are analytically known\footnote{Though Lemma \ref{lem:SDelta} would be difficult to find in this form in the literature, its proof is very classical and relies on the separation of variables in polar coordinates, exactly like for the Dirichlet or Neumann problem in a plane sector \cite[\S5]{kondratiev}, see also \cite[\S2-3]{ddkppp2013}. Note also that the appearance of integers (and polynomial functions) here is due to the fact that the equation $\Delta\Phi=0$  in $\bbR^2\setminus\{0\}$ implies that it is satisfied on the whole plane $\bbR^2$ as soon as $\Re\lambda>0$.}:
\begin{lemma}
\label{lem:SDelta}
\begin{enumerate}
\item If $\mu$ is not an integer, {then} $S_\Delta^\mu=\{0\}$,
\item If $\mu\in\bbN$, {then $\mathcal{Q}^\mu_{\max}=0$ and}
  $S_\Delta^\mu$ is the space of harmonic polynomials {that are} homogeneous of degree $\mu$.
\item If $\mu=0$, {then $\mathcal{Q}^\mu_{\max}=1$ and} $S_\Delta^\mu$ is generated by $1$ and $\log r$.
\item If $\mu<0$ and is an integer, {then $\mathcal{Q}^\mu_{\max}=0$
  and} $r^{-2\mu}\Phi$ is an harmonic polynomial.
\end{enumerate}
\end{lemma}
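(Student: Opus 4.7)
The plan is to separate variables in polar coordinates and convert $\Delta\Phi=0$ into a triangular system of linear ODEs for the angular coefficients $\phi_q(\theta)$. Applying $\Delta=\partial_r^2+r^{-1}\partial_r+r^{-2}\partial_\theta^2$ to a single term $r^\mu\log^q r\,\phi_q(\theta)$ produces contributions at three logarithmic levels ($\log^q r$, $\log^{q-1} r$, $\log^{q-2} r$), and collecting the coefficient of $r^{\mu-2}\log^p r$ in the full sum $\Phi=\sum_{q=0}^{\mathcal{Q}}r^\mu\log^q r\,\phi_q(\theta)$ yields the system
\begin{equation*}
   \phi_p''+\mu^2\phi_p+2\mu(p+1)\phi_{p+1}+(p+1)(p+2)\phi_{p+2}=0,\qquad 0\le p\le\mathcal{Q},
\end{equation*}
with the convention $\phi_{\mathcal{Q}+1}=\phi_{\mathcal{Q}+2}=0$. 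Throughout I would enforce $2\pi$-periodicity of each $\phi_p$ on the circle $\bbR/2\pi\bbZ$, which is the key constraint driving the whole classification.

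First I would examine the top equation $\phi_\mathcal{Q}''+\mu^2\phi_\mathcal{Q}=0$. Its general solution is spanned by $\rme^{i\mu\theta}$ and $\rme^{-i\mu\theta}$ when $\mu\neq 0$, which is $2\pi$-periodic only when $\mu\in\bbZ$; and by $1$ and $\theta$ when $\mu=0$, periodic only for the constant solution. Since $\phi_\mathcal{Q}\equiv 0$ would shrink the effective degree $\mathcal{Q}$, we may assume $\phi_\mathcal{Q}\not\equiv 0$, and periodicity then forces $\mu\in\bbZ$. This proves item~(1): if $\mu\notin\bbZ$, then $S_\Delta^\mu=\{0\}$.

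For the integer cases I would descend in $p$ and handle the resonance carefully. When $\mu=0$ the recurrence reduces to $\phi_p''=-(p+1)(p+2)\phi_{p+2}$, and the top equation forces $\phi_\mathcal{Q}$ to be constant by periodicity; then the equation at $p=\mathcal{Q}-2$ integrates to a quadratic in $\theta$, which is $2\pi$-periodic only if $\phi_\mathcal{Q}=0$. Hence $\mathcal{Q}\ge 2$ is impossible, and the surviving cases $\mathcal{Q}\in\{0,1\}$ produce exactly the basis $\{1,\log r\}$, establishing item~(3). When $\mu=\pm N$ with $N$ a positive integer, the top solution is $\phi_\mathcal{Q}=A\cos N\theta+B\sin N\theta$, and the equation at $p=\mathcal{Q}-1$ becomes $\phi_{\mathcal{Q}-1}''+N^2\phi_{\mathcal{Q}-1}=\mp 2N\mathcal{Q}\,\phi_\mathcal{Q}$, whose right-hand side lies in the kernel of $\partial_\theta^2+N^2$. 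The Fredholm alternative on the circle then excludes a $2\pi$-periodic particular solution unless the forcing vanishes, so $\phi_\mathcal{Q}=0$ whenever $\mathcal{Q}\ge 1$. Consequently $\mathcal{Q}^\mu_{\max}=0$ and $\Phi=r^{\pm N}(A\cos N\theta+B\sin N\theta)$. Writing $z=x+iy=r\rme^{i\theta}$, the identity $r^N(A\cos N\theta+B\sin N\theta)=A\,\Re(z^N)+B\,\Im(z^N)$ shows that this is a homogeneous harmonic polynomial of degree $N$, giving item~(2); for $\mu=-N$, multiplying by $r^{-2\mu}=r^{2N}$ recovers the same polynomial, giving item~(4).

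The main obstacle will be the resonance step: one has to identify the forcing at level $p=\mathcal{Q}-1$ as living in the kernel of $\partial_\theta^2+N^2$ on the circle and then invoke the Fredholm alternative to kill it. Once this is settled, the triangular descent collapses immediately to $\mathcal{Q}=0$ in the nonzero-integer cases, and the remaining tasks are purely routine: the elementary periodicity obstruction for $\partial_\theta^2+\mu^2$, the identification of $r^N(A\cos N\theta+B\sin N\theta)$ with $A\,\Re(z^N)+B\,\Im(z^N)$, and the final multiplication by $r^{2N}$ used in item~(4).
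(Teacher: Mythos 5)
Your proof is correct and takes essentially the same route as the paper: the paper offers no detailed proof of Lemma~\ref{lem:SDelta}, only a footnote declaring it classical by separation of variables in polar coordinates as for the Dirichlet/Neumann problem in a plane sector (Kondrat'ev), and your triangular ODE system in the angular coefficients, with the periodicity obstruction for $\partial_\theta^2+\mu^2$ and the Fredholm-alternative treatment of the resonant level $p=\mathcal{Q}-1$, is exactly that argument carried out in full. The one step you leave implicit---the a priori $\rL^2$ angular coefficients must be bootstrapped to classical solutions of the ODEs by elliptic regularity---is routine and is in fact already noted in the paper's remark that $S_\Delta^\mu\subset S^\mu_{[m]}$ for every $m$.
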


\begin{proof}[of Theorem \ref{th:7}]
If we can show that $\bm{Z}_M^\lambda = \{0\}$, for all $\lambda\in\bbC$ in the strip $0<\Re\lambda<1$, then the result follows immediately from \eqref{eq:exp1}.

Let $0 < \Re \lambda < 1$ and suppose $\bm{\Phi} \in \bm{Z}_M^\lambda$.  Then $M \bm{\Phi} = 0$.  Define $\Psi = \curl \bm{\Phi}$ and $\Pi = \Div \bm{\Phi}$.  Then $\Psi$ and $\Pi$ are quasi-homogeneous of degree $\lambda - 1$ and satisfy
\beq
\label{reg6}
	\Curl \tsfrac{1}{n^2} \Psi - \grad \Pi = 0.
\eeq
Taking the $\curl$ and $\Div$ of \eqref{reg6}, we find
$$
	\Delta(\tsfrac{1}{n^2} \Psi) = 0 \quad \mbox{and} \quad
	\Delta(\Pi) = 0
$$
and so $\tsfrac{1}{n^2} \Psi$ and $\Pi$ belong both to $S_\Delta^{\lambda-1}$.

Since $0 < \Re \lambda < 1$, we have $S_\Delta^{\lambda-1}=\{0\}$ and so $\tsfrac{1}{n^2} \Psi = \Pi = 0$.  Therefore we have proved that $\curl \bm{\Phi}=0$ and $\Div \bm{\Phi}=0$. These equations are valid on $\bbR^2$. The equation $\curl \bm{\Phi}=0$ implies that $\bm{\Phi}$ is a gradient: $\bm{\Phi}=\grad\varphi$ with $\varphi$ quasi-homogeneous of degree $\lambda+1$. The equation $\Div \bm{\Phi}=0$ implies that $\Delta\varphi=0$. Thus $\varphi$ belongs to $S_\Delta^{\lambda+1}$. Since $\lambda+1$ is not an integer,  $S_\Delta^{\lambda+1}$ is reduced to $\{0\}$, $\varphi=0$, and finally $\bm{\Phi}=0$. Hence $\bm{Z}_M^\lambda = \{0\}$ and the proof is complete. 
\end{proof}

The following result on the regularity of the
eigenvectors of the related Floquet operators $L\xib$ 
is a simple corollary to Theorem \ref{th:7}.

\begin{corollary}
\label{th:6}
Let the magnetic permeability $\mu=\mu_0$ be constant and let the electric permittivity $\epsilon$ be translation invariant (in $z$) and piecewise constant periodic over a polygonal partition $\cP'$ of the periodicity cell $Q'$. Let $\beta\in\bbR$ and let $\bxip$ belong to the first Brillouin zone $\cB'$. Then any eigenvector $\bu=(u_x,u_y,u_z)\in\bbH^1(Q')$ of the operator $L\xib$ (cf.\ Theorem {\em\ref{th:3}}) satisfies
\[
   \bu' = (u_x,u_y) \in\rP\rH^{2-\eta}(Q',\cP')^2
   \qquad \mbox{for any $\eta>0$.}
\]
\end{corollary}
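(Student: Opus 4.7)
The plan is to reduce the corollary directly to Theorem \ref{th:7} by means of the Bloch Ansatz that was already set up at the beginning of Section~\ref{sec:reg+}. Given an eigenvector $\bu\in\bbH^1(Q')$ of $L\xib$ with eigenvalue $\kappa^2$, I first extend $\bu$ by periodicity to $\bbR^2$ and define
\[
   \bh(\bx') := \bu(\bx')\,\rme^{i\bxip\cdot\bx'}, \qquad \bx'\in\bbR^2.
\]
Because $\rme^{i\bxip\cdot\bx'}$ is smooth (analytic, in fact) with bounded derivatives of every order on compact sets, and $\bu\in\bbH^1_{\loc}(\bbR^2)$ by periodicity, we obtain $\bh\in\bbH^1_{\bxip}(\bbR^2)$. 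A short calculation, identical to the one performed just before \eqref{reg3} in the text, shows that $L\xib\bu=\kappa^2\bu$ translates under this change of unknown into the system \eqref{reg3} for $\bh$ on the whole of $\bbR^2$.

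Next, I would invoke Theorem \ref{th:7} directly: its hypotheses are met because $\beta$ and $\bxip$ were fixed at the outset, $n$ is piecewise constant on the polygonal partition $\cP'$, and $\bh\in\bbH^1_\loc(\bbR^2)$ satisfies \eqref{reg3}. The theorem therefore yields
\[
   \bh'=(h_x,h_y)\in\rP\rH^{2-\eta}_{\loc}(\bbR^2,\cP')^{2}\quad\mbox{for every}\ \eta>0.
\]
The final step is to transfer this regularity back from $\bh'$ to $\bu'=(u_x,u_y)$. Since $\bu'=\bh'\,\rme^{-i\bxip\cdot\bx'}$, and since multiplication by a $C^\infty$ function with bounded derivatives is a continuous operator on each local Sobolev space $\rH^{s}_\loc(P'_j)$, we get $\bu'\in\rP\rH^{2-\eta}_\loc(\bbR^2,\cP')^2$. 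Because $\bu'$ is periodic with respect to $\cKp$, this local piecewise regularity on $\bbR^2$ is equivalent to the claimed global piecewise regularity $\bu'\in\rP\rH^{2-\eta}(Q',\cP')^2$.

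There is really no substantive obstacle beyond the bookkeeping just described: the essential analytic content lives in Theorem~\ref{th:7}. The only mild point of care is to note that the pointwise product with $\rme^{\pm i\bxip\cdot\bx'}$ does not disturb the subdomain structure, since the partition $\cP'$ is invariant under this multiplication and the exponential is globally smooth, so the piecewise $\rH^{2-\eta}$ bound on each $Q'_j$ is preserved.
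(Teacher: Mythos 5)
Your proposal is correct and follows essentially the same route as the paper, which presents this statement as an immediate consequence of Theorem \ref{th:7} via the Bloch Ansatz $\bh(\bx')=\bu(\bx')\,\rme^{i\bxip\cdot\bx'}$ already set up at the start of \S\ref{sec:reg+}. The only detail you add explicitly --- that multiplication by the smooth exponential preserves piecewise $\rH^{2-\eta}$ regularity on each subdomain --- is exactly the bookkeeping the paper leaves implicit, and it is handled correctly.
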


The following corollary of Theorem \ref{th:7} about the global
regularity of $\bh'$ is a consequence of Grisvard \cite{grisvard} and
Petzoldt \cite[Lemma 2.1]{petzoldt}.

\begin{corollary}
\label{cor:66}
Under the same assumptions as in Theorem \ref{th:7} we have
$$
	\bh'=(h_x,h_y) \in \rH^{3/2-\eta}_{\loc}(\bbR^2)^2 \qquad \mbox{for any $\eta>0$}.
$$
\end{corollary}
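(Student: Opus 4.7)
The plan is to combine the piecewise regularity from Theorem \ref{th:7} with the global $\rH^1$ regularity already established for $\bh'$ in the preceding analysis (Theorems \ref{th:2}--\ref{th:3}), and to pass from piecewise regularity to global fractional regularity via a gluing argument along the polygonal interfaces.

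First, I would recall that by the reductions of Section \ref{sec:reg+}, the field $\bh$ belongs to $\bbH^1_\loc(\bbR^2)$, so in particular $\bh' = (h_x,h_y) \in \rH^1_\loc(\bbR^2)^2$. This is the crucial global information: it means that the traces of $\bh'$ from either side of any interior polygonal interface of the partition $\cP'$ coincide in the natural $\rH^{1/2}$ sense. Second, Theorem \ref{th:7} provides piecewise regularity: for every $\eta>0$ we have $\bh' \in \rP\rH^{2-\eta}_\loc(\bbR^2,\cP')^2$, so on each polygonal subdomain $P'_j$ the restriction $\bh'|_{P'_j}$ lies in $\rH^{2-\eta}_\loc(P'_j)^2$.

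Then I would invoke the abstract gluing result of Grisvard \cite{grisvard} (in the form stated as Petzoldt \cite[Lemma~2.1]{petzoldt}): if a function $v$ on a Lipschitz domain $\Omega$ is piecewise $\rH^s$ with respect to a Lipschitz partition with $s>1/2$, and if its traces from neighbouring subdomains coincide on each interior interface (i.e.\ $v$ is globally $\rH^1$), then $v$ belongs globally to $\rH^{\min(s,\,3/2-\eta)}(\Omega)$ for every $\eta>0$. With $s = 2-\eta$ (taking $\eta$ smaller than the one in the target exponent if necessary), the bound is saturated at the threshold $3/2-\eta$, yielding $\bh' \in \rH^{3/2-\eta}_\loc(\bbR^2)^2$. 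Since the conclusion is local, I would simply apply this lemma on an arbitrary bounded Lipschitz subdomain covering a chosen compact set.

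The main obstacle, and the reason the exponent $3/2$ cannot be improved, is precisely the limitation in the gluing lemma: normal derivatives of $\bh'$ generally jump across interfaces between materials with different $n^2$, so the piecewise second derivatives in $\rL^2$ do not assemble into a function of $\rH^2$ globally, but only into $\rH^{3/2-\eta}$. The matching of traces in $\rH^{1-\eta}$ (coming from the $\rH^1$ regularity) is what allows the fractional order to be pushed up to $3/2-\eta$, and no further.
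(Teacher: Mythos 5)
Your proposal is correct and follows essentially the same route as the paper, which likewise obtains the corollary by combining the piecewise $\rH^{2-\eta}$ regularity of Theorem \ref{th:7} with the global $\rH^1$ continuity of $\bh'$ and then invoking the gluing results of Grisvard \cite{grisvard} and Petzoldt \cite[Lemma 2.1]{petzoldt}. No substantive difference to report.
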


\section{Case of simple interface edges between two materials}
\label{sec:reg++}
We consider finally a particular case of the framework studied in the previous section.  In addition to the assumptions of the previous section ($\mu$ is constant and $\epsilon = n^2 \epsilon_0$ is translationally invariant in $z$ and piecewise constant periodic on a polygonal partition $\cP'$ in the $xy$-plane), we also assume that, at each corner $\bc$, only two sectorial regions are touching the corner. This means that in \eqref{omegaell} we have $\cL=2$ and
\[
   0=\omega_0<\omega_1=\omega_\bc<\omega_2=2\pi \quad\mbox{with}\quad \omega_\bc\neq\pi,
\]
and distinct permittivities $\epsilon_1$ in $\Omega_1=(\omega_0,\omega_1)$ and $\epsilon_2$ in $\Omega_2=(\omega_1,\omega_2)$.

Since the material is translationally invariant in the $z$ coordinate let us come back for a while to the primitive equations \eqref{maxwell012} with the Ansatz
$$
	\bE(\bx) = \be(\bx')\, \rme^{i \beta z} \quad \mbox{and} \quad
	\bH(\bx) = \bh(\bx') \,\rme^{i \beta z}.
$$
The Maxwell equations become
\begin{subequations}
\label{reg2}
\begin{align}
	\nabla_{(0,0,\beta)} \times \be - i \omega \mu \bh &= 0, \label{reg2a} \\
	\nabla_{(0,0,\beta)} \times \bh + i \omega \epsilon \be &= 0. \label{reg2b}
\end{align}
\end{subequations}
The regularity and the first singularities of the longitudinal components $e_z$ and $h_z$ are studied in \cite{elschner} by application of the Kondrat'ev method. As before, singularity exponents $\lambda$ are searched for in the strip $0<\Re\lambda<1$. It was proved in \cite[Lem 4.2]{elschner} that for each corner $\bc$, there exists a unique singularity exponent $\lambda_\bc$ in this strip, and that $\lambda_\bc$ is the unique solution of the transcendental equation
\begin{equation}
\label{eq:lamc}
	\frac{ \sin ( (\pi-\omega_\bc)\lambda_\bc)}{\sin(\pi \lambda_\bc)} = 
	\pm \frac{\epsilon_1 + \epsilon_2}{\epsilon_1 - \epsilon_2}
	\quad\mbox{with}\quad 0<\Re\lambda_\bc<1\,.
\end{equation}
We notice that this equation is the same for the scalar transmission problem $\Delta_\epsilon=\nabla \cdot(\epsilon\nabla \cdot)$ \cite{CostabelStephan85} and the common optimal regularity exponent for $\Delta_\epsilon$ and for the problem for the couple $(e_z,h_z)$ is
\begin{equation}
	\sigma_\epsilon = \min_{\bc \in \cC} \lambda_\bc.
\end{equation} 

\begin{theorem}[Elschner, Hinder, Penzel \& Schmidt \cite{elschner}]
\label{th:elschner}
Suppose that $(\be,\bh)$ belongs to $\bbL^2_{\loc}(\bbR^2) \times \bbL^2_{\loc}(\bbR^2)$ and satisfies \eqref{reg2}.  Then, 
$$
	e_z \in \rP\rH^{1+\sigma_\epsilon-\eta}_{\loc}(\bbR^2,\cP') \quad \mbox{and} \quad 
	h_z \in \rP\rH^{1+\sigma_\epsilon-\eta}_{\loc}(\bbR^2,\cP')\ \quad \mbox{for any $\eta>0$},
$$
and in the neighbourhood of each corner $\bc \in \cC$ there exist a constant $\gamma_\bc$ and generic scalar functions $\phi(\theta)$ and $\psi(\theta)$ that are smooth on $[0,\omega_c]$ and $[\omega_c,2\pi]$ such that
\begin{align*}
	e_z &=  \gamma_\bc r^{\lambda_c} \phi(\theta) + e_{z,0} \qquad\mbox{with}\qquad
	e_{z,0} \in\rP\rH^{2-\eta}(\cB(\bc,r_0),\cP')\ \ \forall\eta>0, \\
	h_z &=  \gamma_\bc r^{\lambda_c} \psi(\theta)  + h_{z,0} \qquad\mbox{with}\qquad
	h_{z,0} \in\rP\rH^{2-\eta}(\cB(\bc,r_0),\cP')\ \ \forall\eta>0.
\end{align*}
\end{theorem}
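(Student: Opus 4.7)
The plan is to apply the Kondrat'ev corner-singularity method directly to the reduced two-dimensional system \eqref{reg2}. The first step is algebraic. On each subdomain $Q'_j$ where $\epsilon = \epsilon_j$ and $\mu = \mu_0$ are constant, the four transverse components of \eqref{reg2a}--\eqref{reg2b} form four scalar equations in the four unknowns $e_x, e_y, h_x, h_y$, and I would solve them to express each as a linear combination of $\partial_x e_z$, $\partial_y e_z$, $\partial_x h_z$, $\partial_y h_z$, with coefficients depending only on $\omega, \beta, \epsilon_j, \mu_0$. This inversion is admissible as long as $\kappa_j^2 := \omega^2\epsilon_j\mu_0 - \beta^2$ does not vanish. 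Substituting back into the $z$-components shows that on each $Q'_j$ both $e_z$ and $h_z$ satisfy a scalar Helmholtz equation $(\Delta + \kappa_j^2) u = 0$. I would then translate the physical transmission conditions (continuity of tangential $\bE$, tangential $\bH$, normal $\epsilon\bE$, and normal $\mu\bH$ across each interface) into four scalar jump conditions on the Dirichlet and Neumann traces of $e_z$ and $h_z$, with coefficients built from $\epsilon_1, \epsilon_2, \mu_0, \omega, \beta$.

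Fixing a corner $\bc \in \cC$, Kondrat'ev's theory says the singular exponents are determined by the principal part: $-\Delta$ in each of the two sectors $\Omega_1, \Omega_2$ meeting at $\bc$, together with the frozen transmission conditions. Seeking homogeneous solutions of the form $r^\lambda \phi_\ell(\theta)$ in each sector, the space of admissible $\phi_\ell$ is spanned by $\cos(\lambda\theta)$ and $\sin(\lambda\theta)$, giving eight unknowns in total across the two scalar fields and the two sectors. The frozen transmission conditions at both interface edges of the corner then assemble into a homogeneous $8 \times 8$ linear system whose characteristic determinant vanishes exactly at the singular exponents. The main obstacle is to verify by explicit computation that this determinant factors, up to nonvanishing prefactors, as a product of two copies of the scalar transmission symbol for $\Delta_\epsilon$ studied in \cite{CostabelStephan85}; this is what forces the Maxwell exponents to coincide with the roots of \eqref{eq:lamc}, and what ensures that the coupling introduced through $\beta$ in the transverse-field representation contributes no new singular exponents in the strip of interest.

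Once this factorization is in place, the rest follows from standard tools. Equation \eqref{eq:lamc} admits a unique root $\lambda_\bc$ in the strip $0 < \Re\lambda < 1$ whenever $\omega_\bc \neq \pi$ and $\epsilon_1 \neq \epsilon_2$, by a monotonicity argument on its left-hand side (cf.\ \cite{CostabelStephan85}). The associated kernel is one-dimensional, so the singular couple $(r^{\lambda_\bc}\phi(\theta), r^{\lambda_\bc}\psi(\theta))$ carries a single scalar coefficient $\gamma_\bc$ shared by $e_z$ and $h_z$. Applying the Kondrat'ev expansion theorem for transmission problems (in the form developed in \cite{NicaiseSandig94} and already used in the previous section) corner by corner, together with piecewise $\rH^2$ regularity along the smooth parts of the interface from \cite{costabel}, yields the stated regularity $e_z, h_z \in \rP\rH^{1+\sigma_\epsilon - \eta}_{\loc}(\bbR^2, \cP')$ and the local expansions around each corner.
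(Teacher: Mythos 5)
This theorem is not proved in the paper at all: it is imported verbatim from \cite{elschner}, and the surrounding text only records that \cite[Lem 4.2]{elschner} identifies the unique exponent $\lambda_\bc$ in the strip as the root of \eqref{eq:lamc} and that this coincides with the scalar $\Delta_\epsilon$ transmission equation. Your sketch therefore has to be measured against the argument of that reference, and in outline you do reconstruct it correctly: eliminate the transverse components in favour of $\nabla' e_z,\nabla' h_z$ on each subdomain, obtain Helmholtz equations for $e_z,h_z$ coupled through $\beta$ in the first-order transmission conditions, and run Kondrat'ev's method at each corner.

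There are, however, genuine gaps. The step you yourself call the main obstacle --- the factorization of the $8\times 8$ Mellin determinant --- is not only left unverified but, as you state it, cannot be right: if the determinant were, up to nonvanishing factors, \emph{two copies} (the square) of the scalar $\Delta_\epsilon$ transmission symbol, then $\lambda_\bc$ would be a double zero, so the singular space at $\lambda_\bc$ would either be two-dimensional or carry a Jordan chain producing a $\log r$ factor; both outcomes contradict the statement you are proving, which asserts a single shared constant $\gamma_\bc$ and a log-free term $\gamma_\bc r^{\lambda_\bc}(\phi,\psi)$. The structure consistent with \cite[Thm 8.1]{costabel} and with \cite{elschner} is that the determinant splits into one factor carrying the $\Delta_\epsilon$ symbol, whose unique \emph{simple} zero in $0<\Re\lambda<1$ is $\lambda_\bc$, and one factor carrying the $\Delta_\mu$ symbol, which for constant $\mu=\mu_0$ has no zeros in that strip; and this splitting genuinely requires computation, since the $\beta$-coupling makes the off-diagonal blocks of the transmission conditions nonzero. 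Two further points: the elimination of the transverse fields requires $\omega^2\epsilon_j\mu_0-\beta^2\neq 0$ in every material, a standing assumption in \cite{elschner} that appears neither in the statement nor in your argument beyond an aside, and you do not say what happens when it fails; and before Kondrat'ev's expansion theorem can be invoked you need $e_z,h_z\in\rH^1_{\loc}$, which must first be extracted from the curl--div structure of \eqref{reg2} as in \S\ref{sec:bounded}, since the hypothesis is only $\bbL^2_{\loc}$ regularity.
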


\begin{remark}
\label{rem:elschner}
In the neighbourhood of the corner $\bc \in \cC$, the regularity of $e_z$ and $h_z$ is limited by $\lambda_\bc$:
\[
   e_z,\ h_z \in\rP\rH^{1+\lambda_\bc-\eta}(\cB(\bc,r_0),\cP')\ \quad \mbox{for any $\eta>0$}.
\]
\end{remark}

\begin{remark}
According to \cite[Theorem 8.1]{costabel}, $\lambda_\bc>\frac12$. Therefore, $e_z$ and $h_z$ have global regularity (similarly to Corollary \ref{cor:66}) 
\[
   e_z,\ h_z \in\rH^{3/2-\eta}_{\loc}(\bbR^2)\ \quad \mbox{for any $\eta>0$}.
\]
\end{remark}

Now, relying on Theorem \ref{th:elschner}, we are in a position to further
improve our regularity result of Theorem \ref{th:7} 
on the transverse components of the magnetic field $\bh$.

\begin{theorem}
\label{th:hx+}
Suppose that $(\be,\bh) \in  \bbL^2_{\loc}(\bbR^2) \times \bbL^2_{\loc}(\bbR^2)$ satisfies \eqref{reg2}. Let a corner $\bc \in \cC$ be chosen. Then there exist a constant $\gamma_\bc$ and generic two-component functions $\bm\phi_0(\theta)$ and $\bm\phi_1(\theta)$ that are smooth on $[0,\omega_c]$ and $[\omega_c,2\pi]$, such that the transverse components $\bh'=(h_x,h_y)$ of the magnetic field can be expanded as
\[
  \bh' =  \gamma_\bc r \big\{\bm\phi_0(\theta) +\log r\bm\phi_1(\theta)\big\}
  + \bh'_{0} \ \ \mbox{with}\ \ 
	\bh'_{0} \in\rP\rH^{2+\lambda_\bc-\eta}(\cB(\bc,r_0),\cP')^2\ \ \forall\eta>0. 
\]
Here $\lambda_\bc$ is the singularity exponent defined in \eqref{eq:lamc}.
\end{theorem}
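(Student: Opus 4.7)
The plan is to refine the Kondrat'ev expansion \eqref{eq:exp1} used in the proof of Theorem \ref{th:7} by extending the strip of analysis up to $\Re\lambda=1+\lambda_\bc$, combining the sharper expansion with the structural information on $e_z,h_z$ provided by Theorem \ref{th:elschner}. The refined expansion should take the form
\[
   \bh' \;=\; \sum_{\lambda\in\Sigma}\bm{\Phi}^\lambda \;+\; \bh'_0,
   \qquad
   \bh'_0\in\rP\rH^{2+\lambda_\bc-\eta}(\cB(\bc,r_0),\cP')^2,
\]
where $\Sigma$ collects both the kernel exponents with $\bm{Z}_M^\lambda\neq\{0\}$ and the exponents of particular solutions driven by the singular part of the right-hand side $\boldf\,'$ in \eqref{eq:bf}.

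First I would verify $\Sigma=\{1\}$ in this strip. Repeating the Cauchy-Riemann reduction from the proof of Theorem \ref{th:7}, $\bm{Z}_M^\lambda\neq\{0\}$ requires $S_\Delta^{\lambda-1}$ or $S_\Delta^{\lambda+1}$ to be nontrivial, which by Lemma \ref{lem:SDelta} forces $\lambda\in\bbZ$, and $\lambda=1$ is the only integer in $(0,1+\lambda_\bc)$ since $\lambda_\bc<1$. On the other hand, the right-hand side $\boldf\,'$ inherits an $r^{\lambda_\bc-1}$ leading singularity from $\partial_xh_z,\partial_yh_z$ through \eqref{eq:bf} and Theorem \ref{th:elschner}; because $\bm{Z}_M^{\lambda_\bc+1}=\{0\}$, the corresponding particular solution is of clean $r^{\lambda_\bc+1}$ type without $\log r$, already lies in $\rP\rH^{2+\lambda_\bc-\eta}$, and is absorbed into $\bh'_0$.

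Next I would analyse $\bm{Z}_M^1$ explicitly. Setting $\Psi:=\curl\bm{\Phi}$ and $\Pi:=\Div\bm{\Phi}$, both are quasi-homogeneous of degree $0$ with $\tsfrac{1}{n^2}\Psi,\Pi\in S_\Delta^0$. The Cauchy-Riemann coupling $\Curl(\tsfrac{1}{n^2}\Psi)=\grad\Pi$ together with single-valuedness on $\bbC\setminus\{0\}$ forces $\Pi\equiv c_2$ and $\tsfrac{1}{n^2}\Psi\equiv c_1$ to be constants, so $\Div\bm{\Phi}=c_2$ and $\curl\bm{\Phi}=c_1n^2$ (piecewise constant). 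With the ansatz $\bm{\Phi}=r\bm\phi_0(\theta)+r\log r\,\bm\phi_1(\theta)$, equating the $\log r$ coefficients in polar coordinates yields a $2\times 2$ constant-coefficient ODE on $S^1$ whose solutions are generated by $\cos 2\theta,\sin 2\theta$, while the constant parts solve an inhomogeneous problem that must reproduce the interfacial jump $c_1(n_1^2-n_2^2)$ of $\curl\bm{\Phi}$ at $\theta=\omega_\bc$. Imposing the matching at $\theta=\omega_\bc$ and the periodicity at $\theta=0\equiv 2\pi$ so that $\bm{\Phi}\in\rH^1$ across the interface, a direct linear-algebra computation shows this jump cannot be compensated by a logarithm-free degree-$1$ field whenever $\omega_\bc\neq\pi$ and $n_1\neq n_2$, forcing a nonzero $\log r$ coefficient. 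Modulo smooth degree-$1$ fields (which fall into $\rP\rH^{2+\lambda_\bc-\eta}$), $\bm{Z}_M^1$ is one-dimensional, producing the claimed family $\gamma_\bc\bigl\{r\bm\phi_0(\theta)+r\log r\,\bm\phi_1(\theta)\bigr\}$. The scalar coefficient $\gamma_\bc$ is then determined by $c_1,c_2$ and, via the Maxwell relations $\curl\bh'=-i\omega\epsilon_0n^2e_z$ and $\Div\bh'=-i\beta h_z$ from \eqref{reg2b} and \eqref{reg4d}, by the corner values of the smooth parts of $e_z$ and $h_z$.

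The main obstacle is the explicit transmission-matching computation on $S^1$: solving the $2\times 2$ angular ODE with piecewise constant source and confirming that a $\log r$ contribution is geometrically unavoidable exactly when $\omega_\bc\neq\pi$. The computation is elementary linear algebra with the modes $\cos 2\theta,\sin 2\theta$, but one must carefully track the interface and periodicity conditions for the vector field $\bm{\Phi}$ so that its $\rH^1$-regularity pins down $\bm{Z}_M^1$ as one-dimensional, recovering in a concrete form the obstruction to invertibility of the Mellin symbol on the line $\Re\lambda=1$ that was flagged in the footnote following \eqref{eq:exp1}.
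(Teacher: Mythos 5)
Your proposal follows essentially the same route as the paper: use Theorem \ref{th:elschner} to upgrade the right-hand side $\boldf\,'$ of \eqref{eq:bh'} to $\rP\rH^{\lambda_\bc-\eta}$, widen the Kondrat'ev strip to $0<\Re\lambda<1+\lambda_\bc$, observe that $\lambda=1$ is the only exponent where $\bm{Z}_M^\lambda$ (now taken modulo polynomials) can be nontrivial, and then reduce via $\Psi=\curl\bm{\Phi}$, $\Pi=\Div\bm{\Phi}\in S_\Delta^{0}$ and the relation $\Curl\tsfrac{1}{n^2}\Psi-\grad\Pi=0$ to the system $\curl\bm{\Phi}=\gamma_0 n^2$, $\Div\bm{\Phi}=\tilde\gamma_0$; your observation that the $r^{1+\lambda_\bc}$ particular solution forced by the singular part of $\boldf\,'$ is log-free and absorbed into $\bh'_0$ also matches the paper's remark. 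The one place you genuinely diverge is the final, decisive step — showing that this system has a one-dimensional solution space modulo polynomials whose generator carries an $r\log r$ term. You propose a direct angular transmission computation with the modes $\cos2\theta,\sin2\theta$ and explicitly flag it as the unexecuted "main obstacle"; the paper instead splits into the three subproblems \eqref{reg8a}--\eqref{reg8c}, disposes of the first two by scalar potentials $V$ with $\Delta V=0$ or $\Delta V=1$ (whose solutions in $S_\Delta^2$ are polynomials, hence trivial in the quotient), and resolves the third by a stream function $W$ with $\bm{\Phi}=\Curl W$ and $\Delta W=n^2$, importing the explicit corner asymptotics $W=r^2w_0(\theta)+r^2\log r\,w_1(\theta)$ from \cite[Prop.~4.1]{ddkppp2013}. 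The paper's device buys you the existence and the precise form of the singular function without redoing the transmission linear algebra, which is exactly the computation you would still need to carry out (and verify is nondegenerate for $\omega_\bc\neq\pi$, $n_1\neq n_2$) before your argument is complete; as it stands, the claim that the interfacial jump "cannot be compensated by a logarithm-free degree-$1$ field" is asserted rather than proved, and it is the heart of the theorem.
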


\begin{proof}
Recall from Theorem \ref{th:7}  that $\bh' \in
\rP\rH^{2-\eta}_{\loc}(\bbR^2,\cP')^2$ for any $\eta > 0$
and that $M\bh'=\boldf\,'$ with $M$ defined in \eqref{eq:M} and
$\boldf\,'$ given in \eqref{eq:bf}. Let us choose a corner
$\bc\in\cC$. Relying on Theorem \ref{th:elschner} (and Remark
\ref{rem:elschner}), we see that $\boldf\,'$ is more regular than just
$\rL^2$. In fact,
\[
   \boldf\,' \in \rP\rH^{\lambda_\bc-\eta}(\cB(\bc,r_0),\cP')^2 \quad\mbox{for any $\eta>0$}.
\]
Now, we have an expansion for $\bh'$ like \eqref{eq:exp1} for
$\lambda$ in the strip $0<\Re\lambda<1+\lambda_\bc$. Since this strip
contains also the integer $1$, we have to consider a more general definition for the space $\bm{Z}_M^\lambda$, like in \cite{dauge88,costabel,costabel00}:
Let $\bm{P}^\lambda$ be the space of two-component polynomial functions in $\bx'$ that are homogeneous of degree $\lambda$, then
\begin{equation}
\label{eq:Zgen}
   \bm{Z}_M^\lambda :=\{ \bm{\Phi} \in \bm{S}^\lambda_{[1]} / \bm{P}^\lambda: \quad
   M\bm{\Phi}\in \bm{P}^{\lambda-2}\}.
\end{equation}
We note that $\bm{P}^\lambda = \{0\}$ 
if $\lambda$ is not a natural number. Therefore, as soon as $\lambda
\notin \bbN \cup \{0\}$ this definition of $\bm{Z}_M^\lambda$ reduces
to the original one in \eqref{eq:Z}. With this extended definition we have
\begin{multline}
\label{eq:exp2}
   \bh' = \sum_{0<\Re\lambda<1+\lambda_\bc} \bm{\Phi}^\lambda + \bh'_0 \\[-1ex] 
   \mbox{with}\quad
   \bm{\Phi}^\lambda\in\bm{Z}_M^\lambda \ \ \mbox{and}\ \ 
   \bh'_0\in\rP\rH^{2+\lambda_\bc-\eta}(\cB(\bc,r_0),\cP')^2\ \ \forall\eta>0.
\end{multline}
It remains to find for which values of $\lambda$ the space $\bm{Z}_M^\lambda$ is not reduced to $\{0\}$. The sole integer in the strip $0<\Re\lambda<1+\lambda_\bc$ is $\lambda=1$. For any other value of $\lambda$, we prove as in the proof of Theorem \ref{th:7} that $\bm{Z}_M^\lambda=\{0\}$.

Let $\lambda=1$ and suppose $\bm{\Phi} \in \bm{Z}_M^1$. As in the
proof of Theorem \ref{th:7} define $\Psi = \curl \bm{\Phi}$ and $\Pi
= \Div \bm{\Phi}$.  Since $\bm{P}^{\lambda-2} = \{0\}$, 
we deduce as above that
\[
   \tsfrac{1}{n^2}\Psi\in S_\Delta^{0} \quad\mbox{and}\quad
   \Pi \in S_\Delta^{0},
\]
where $S_\Delta^{0}$ is defined in \eqref{eq:SDelta}. Since the space
$S_\Delta^{0}$ is generated by $1$ and $\log r$ (see Lemma
\ref{lem:SDelta}), there are constants
$\gamma_0,\gamma_1,\tilde\gamma_0,\tilde\gamma_1 \in \bbR$, such that
\beq
\label{eq:c1}
	\curl \bm{\Phi} = n^2 (\gamma_0 +\gamma_1\log r) \quad \mbox{and} \quad
	\Div \bm{\Phi} = \tilde\gamma_0 + \tilde\gamma_1\log r.
\eeq
These conditions are necessary for $M\bm{\Phi}=0$ to hold. Calculating $M\bm{\Phi}$ with the Ansatz \eqref{eq:c1}, we find
\[
   M\bm{\Phi} = \gamma_1 \Curl \log r - \tilde\gamma_1\grad \log r.
\]
The equation $M\bm{\Phi}=0$ implies that $\gamma_1=\tilde\gamma_1=0$. Therefore we are left with
\begin{equation}
\label{eq:Phi}
	\curl \bm{\Phi} = n^2 \gamma_0  \quad \mbox{and} \quad
	\Div \bm{\Phi} = \tilde\gamma_0.
\end{equation}
It remains to find all solutions to \eqref{eq:Phi} in $\bm{S}^1_{[1]} / \bm{P}^1$.
Equivalently, we can look for:
\begin{subequations}
\begin{align}
	\label{reg8a} \mbox{all solutions of} && \curl \bm{\Phi} &= 0 &\mbox{and}&& \Div \bm{\Phi} &= 0, \\[-0.5ex]
	\label{reg8b} \mbox{a particular solution of} && \curl \bm{\Phi} &= 0 &\mbox{and}&& \Div \bm{\Phi} &= 1, \\[-0.5ex]
	\label{reg8c} \mbox{a particular solution of} && \curl \bm{\Phi} &= n^2 &\mbox{and}&& \Div \bm{\Phi} &= 0.
\end{align}
\end{subequations}

Case \eqref{reg8a}.  Suppose that $\bm{\Phi} \in \bm{S}^1_{[1]} / \bm{P}^1$
satisfies \eqref{reg8a}.  Since $\curl \bm{\Phi} = 0$ there exists a
potential $V \in \bm{S}^2_{[2]}$ such that $\bm{\Phi} = \grad V$.  Then
$\Delta V = 0$, so $V$ belongs to $S_\Delta^2$.  Hence by  Lemma
\ref{lem:SDelta}, $V$ must be a homogeneous polynomial of degree
$2$. This implies that $\bm{\Phi}$ is a homogeneous polynomial of degree
$1$, and so $\bm{\Phi} = 0$ in $\bm{Z}_M^1$.

Case \eqref{reg8b}.  Suppose that $\bm{\Phi} \in \bm{S}^1_{[1]} / \bm{P}^1$
satisfies \eqref{reg8b}.  Again there exists a potential $V \in
\bm{S}^2_{[2]}$ such that $\bm{\Phi} = \grad V$ and $\Delta V = 1$.  A
particular solution is $V = \tsfrac{1}{2} x^2$, hence $\bm{\Phi}$ is a
polynomial of degree $1$, and so $\bm{\Phi} = 0$ in $\bm{Z}_M^1$ again.

Case \eqref{reg8c}.  Suppose that $\bm{\Phi} \in \bm{S}^1_{[1]} / \bm{P}^1$ satisfies \eqref{reg8c}.  Since $\Div\bm{\Phi}=0$, there exists $W \in \bm{S}^2_{[2]}$ such that $\bm{\Phi} = \Curl W$.  Then $\Delta W = n^2$.  Using \cite[Proposition 4.1 and \S4.3] {ddkppp2013} an explicit solution can be found of the form
$$
	W(r,\theta) = r^2 w_0(\theta) + r^2 \log r \; w_1(\theta),
$$
where $w_0$ and $w_1$ are smooth functions on $[0,\omega_\bc]$ and
$[\omega_\bc,2\pi]$. Calculating $\bm{\Phi} = \Curl W$ we obtain an
expression for $\bm{\Phi}$ in the form $\bm{\Phi}  = r
\bm\phi_{0}(\theta) + r \log r \bm\phi_{1}(\theta)$. This proves in
the end that $\bm{Z}_M^1$ has dimension $1$ and, combined with
expansion \eqref{eq:exp2}, achieves the proof of Theorem
\ref{th:hx+}. 
\end{proof}

\begin{remark}
So for the case of simple interface edges between two materials the
regularity provided by Theorem \ref{th:7} is quasi-optimal, since the solution asymptotics contains a singular function with a $r\log r$ term, thus not in $\rP\rH^2$ (but still in $\rP\rH^{2-\eta}$ for any positive $\eta$).
The next singularity in the expansion of $\bh'$ could be determined,
too. It originates from the first singularity of $h_z$ as described in
Theorem \ref{th:elschner}, via the right hand side $\boldf$ of
equation \eqref{eq:bh'}, cf.\ \eqref{eq:bf}. This next singularity has the form 
\[
   \tilde\gamma_\bc\, r^{1+\lambda_\bc} \tilde{\bm\phi}(\theta)
\]
leading to a new regular part belonging to $\rP\rH^{3-\eta}(\cB(\bc,r_0),\cP')^2$ for all $\eta>0$.
\end{remark}

\section{Conclusions and consequences for numerical methods} 
\label{sec fem}

The almost optimal piecewise regularity results in the previous two
sections for the transverse components of the magnetic field in
translationally invariant, periodic media are somewhat unexpected. 
When one is used to the scalar transmission problem $\Delta_\epsilon=
\nabla \cdot(\epsilon\nabla \cdot)$ or studies the general Maxwell
regularity theory in \cite{costabel,costabel00}, one would expect in
general only piecewise $\rH^{1+s}$ regularity for some possibly 
``small'' $s > 0$ (as outlined above). This is particularly pronounced 
in the case of a cross point, where four regions $Q'_\ell$ with 
$\epsilon_1 = \epsilon_3 \ll \epsilon_2 = \epsilon_4$ meet in a point. 
In that case the solution to the scalar transmission problem is only 
in $\rP\rH^{1+s}$ with $s<\sigma_\varepsilon \ll 1$ (cf. 
\cite[Th.8.1]{costabel}), whereas the transverse components of the
magnetic field $\bh'$ are in $\rP\rH^{2-\eta}$, for all $\eta > 0$. 
In the case of PCFs, where we only have simple interfaces between two
materials, there is also no significant loss of regularity near 
reentrant corners in any of the subregions $Q'_\ell$, as in the scalar
elliptic transmission problem. 

The results also carry over to piecewise $C^2$ cross sections 
without cusps. This follows immediately from the above when the 
interfaces are straight near the corners, but the analysis can 
also be extended to the general case where there exists a smooth, 
local diffeomorphism that straightens the interfaces abutting to 
the same corner. We do not give any details here but refer to 
\cite[\S2-3]{kondratiev} instead. 

The improved regularity results are of interest in the design and
analysis of more efficient finite element methods for PCFs.  
The convergence of finite element methods depends only on the 
piecewise regularity of the solution 
(cf.~\cite{brambleking,chenzou,chenduzou}), and so the results in 
this paper suggest the following conclusions:
\begin{enumerate}
\item Since the regularity of $\bH$ is better than $\bE$ when 
$\mu = \mu_0$ is constant (Corollary \ref{cor:1th:3}), it is 
better to apply a numerical method to a formulation of Maxwell \
equations based on \eqref{maxwell12} instead of \eqref{maxwell012}. 
It is also definitely better to work with the transverse components 
of the magnetic field rather than with the transverse components of 
the electric field. The results in Section \ref{sec:reg++} (see also
\cite{costabel}) imply that $e_x$ and $e_y$ have significantly lower
regularity and are only in $\rP\rH^{\sigma_\epsilon-\eta}$, for any 
$\eta>0$. (Recall that $\sigma_\epsilon < 1$.) Most papers in the
numerical modelling of PCFs do in fact choose the magnetic field, 
but some of them state that one could equally choose the electric field 
(e.g.~\cite{saitohkoshiba}). Our results imply that this would lead 
to a significantly worse convergence (at least for materials with 
constant magnetic permeability).\vspace{1ex}
\item
The reduction in \S\ref{sec:pertrans} to a family of two-dimensional
eigenproblems with bilinear forms $a\xib(\bu,\bv)$ provides an 
obvious advantage for computations over the original three-dimensional
eigenproblem in \S\ref{sec:periodic}. Moreover, our regularity theory
shows that $\bh'$ has better regularity than $h_z$ (see Thoerem 
\ref{th:7} vs. Theorem \ref{th:elschner}) and this suggests that 
it may be advantageous (in terms of convergence rates) to also 
eliminate $h_z$ from the reduced eigenproblem in \S\ref{sec:pertrans} 
and to solve a problem where $\bh'$ is the unknown eigenfunction 
(as in \cite{norton2012apnum}). Even without eliminating $h_z$ it 
may be possible to develop bespoke convergence results that show 
quasi-optimal convergence for $\bh'$ given its improved regularity. 
For PCFs this also seems to be a better approach than that advocated 
in \cite{elschner} which reduces \eqref{maxwell012} to a problem in 
$h_z$ and $e_z$ only.\vspace{1ex}
\item  
The convergence rate of spectral methods, 
such as the planewave expansion me\-thod 
(cf.\cite{pottage2003robust,joannopoulosbook,norton2012apnum}), 
depends on the global regularity of the solution 
(see \cite{norton2010,norton2012apnum}), whereas 
finite element methods (with their local basis 
functions) are able to exploit any piecewise 
regularity (cf.~\cite{brambleking,chenzou,chenduzou}).
Globally the regularity of $\bh'$ is restricted to 
$\rH^{3/2-\eta}$ (cf.~Corollary \ref{cor:66}) while 
locally within each material $\bh'$ is $\rH^{2-\eta}$ 
(cf.~Theorem \ref{th:7}). Therefore, we would expect 
that finite element methods applied to a reduced 
eigenproblem in $\bh'$ only, such as described in 
\cite{norton2012apnum}, will converge faster with 
respect to the number of degrees of freedom than 
the planewave methods that were employed there. 
With an adapted mesh the performance of the finite 
element method will be even better.
\end{enumerate}

\paragraph{Acknowledgements}
We would like to thank Johannes Elschner for 
his useful advice on this problem.

\def\cprime{$'$}

\end{document}